\documentclass[a4paper,twoside,12pt]{amsart}
\usepackage{amsthm, amsmath, amssymb}
\usepackage{xy} \xyoption{all}

\usepackage{HolonomicNoteArXiv}

\title{On a reconstruction theorem for holonomic systems}

\author[A.~D'Agnolo]{Andrea D'Agnolo} 

\address[A.~D'Agnolo]{Dipartimento di Matematica\\ 
Universit{\`a} di Padova\\ 
via Trieste 63, 35121 Padova, Italy} 

\email{dagnolo@math.unipd.it}

\thanks{The first author expresses his gratitude to the RIMS of Kyoto University for hospitality during the preparation of this paper.}

\author[M.~Kashiwara]{Masaki Kashiwara} 

\address[M.~Kashiwara]{Research Institute for Mathematical Sciences\\
Kyoto University\\
Kyoto, 606-8502, Japan}

\email{masaki@kurims.kyoto-u.ac.jp}

\address[M.~Kashiwara]{Department of Mathematical Sciences, Seoul National
University, Seoul, Korea}

\keywords{Riemann-Hilbert problem, holonomic $\D$-modules, ind-sheaves, Stokes phenomenon}

\subjclass[2010]{32C38, 35A20, 32S60, 34M40}

\begin{document}

\maketitle

\begin{abstract}
Let $X$ be a complex manifold.
The classical Riemann-Hilbert correspondence associates to a regular holonomic system $\shm$ the $\C$-constructible complex of its holomorphic solutions.
Let $t$ be the affine coordinate in the complex projective line.
If $\shm$ is not necessarily regular, we associate to it the ind-$\R$-constructible complex $G$ of tempered holomorphic solutions to $\shm\etens\D e^t$.
We conjecture that this provides a Riemann-Hilbert correspondence for holonomic systems.
We discuss the functoriality of this correspondence, we prove that
$\shm$ can be reconstructed from $G$ if $\dim X=1$, and we show how the Stokes data are encoded in $G$.
\end{abstract}

\section*{Introduction}

Let $X$ be a complex manifold. The Riemann-Hilbert correspondence of~\cite{Kas84} establishes an anti-equivalence
\[
\xymatrix{
\BDC_\reghol(\D_X) \ar@<.5ex>[r]^-{\Phi^0} 
& \BDC_{\Cc}(\C_X) \ar@<.5ex>[l]^-{\Psi^0}
}
\]
between regular holonomic $\D$-modules and $\C$-constructible complexes.
Here, $\Phi^0(\shl) = \rhom[\D_X](\shl,\OO_X)$ is the complex of holomorphic solutions to $\shl$,
and $\Psi^0(L) = \thom(L,\OO_X) = \rhom(L,\OOt_X)$ is the complex of holomorphic functions tempered along $L$.
Since $\shl\simeq\Psi^0(\Phi^0(\shl))$, this shows in particular that $\shl$ can be reconstructed from $\Phi^0(\shl)$.

We are interested here in holonomic $\D$-modules which are not necessarily regular.

The theory of ind-sheaves from~\cite{KS01} allows one to consider the complex $\Phi^{\mathsf t}(\shm) = \rhom[\D_X](\shm,\OOt_X)$ of tempered holomorphic solutions to a holonomic module $\shm$.
The basic example $\Phi^{\mathsf t}(\D_\C e^{1/x})$ was computed in~\cite{KS03}, and 
the functor $\Phi^{\mathsf t}$ has been studied in~\cite{Mor07,Mor10}. However, since $\Phi^{\mathsf t}(\D_\C e^{1/x}) \simeq \Phi^{\mathsf t}(\D_\C e^{2/x})$, one cannot reconstruct $\shm$ from $\Phi^{\mathsf t}(\shm)$.

Set $\Phi(\shm) = \Phi^{\mathsf t}(\shm \etens \D_\PP e^t)$, for $t$ the affine variable in the complex projective line $\PP$. This is an ind-$\R$-constructible complex in $X\times\PP$.
The arguments in~\cite{DAg12} suggested us how $\shm$ could be reconstructed from $\Phi(\shm)$ via a functor $\Psi$, described below (\S\ref{se:corr}).

We conjecture that the contravariant functors
\[
\xymatrix{
\BDC(\D_X) \ar@<.5ex>[r]^-\Phi 
& \BDC(\mathrm{I}\C_\XP) \ar@<.5ex>[l]^-\Psi,
}
\]
between the derived categories of $\D_X$-modules and of ind-sheaves on $\XP$,
provide a Riemann-Hilbert correspondence for holonomic systems.

To corroborate this statement, we discuss the functoriality of $\Phi$ and $\Psi$ with respect to proper direct images and to tensor products with regular objects (\S\ref{se:funct}). This allows a reduction to holonomic modules with a good formal structure.

When $X$ is a curve and $\shm$ is holonomic, we prove that the natural morphism $\shm\to\Psi(\Phi(\shm))$ is an isomorphism (\S\ref{se:reconstruction}).
Thus $\shm$ can be reconstructed from $\Phi(\shm)$.

Recall that irregular holonomic modules are subjected to the Stokes phenomenon. We describe with an example how the Stokes data of $\shm$ are encoded topologically in the ind-$\R$-constructible sheaf $\Phi(\shm)$ (\S\ref{se:Stokes}).

In this Note, the proofs are only sketched.
Details will appear in a forthcoming paper.
There, we will also describe some of the properties of 
the essential image of holonomic systems by the functor $\Phi$.
Such a category is related to a construction of~\cite{Tam10}.

\section{Notations}

We refer to \cite{KS90,KS01,Kas03}.

Let $X$ be a real analytic manifold.

Denote by $\BDC(\C_X)$ the bounded derived category of sheaves of $\C$-vector spaces, and by $\BDC_{\Rc}(\C_X)$ the full subcategory of objects with $\R$-constructible cohomologies. Denote by $\tens$, $\rhom$, $\opb f$, $\roim f$, $\reim f$, $\epb f$ the six Grothendieck operations for sheaves. (Here $f\colon X\to Y$ is a morphism of real analytic manifolds.)

For $S\subset X$ a locally closed subset,  we denote by $\C_S$ the zero extension to $X$ of the constant sheaf on $S$.

Recall that an ind-sheaf is an ind-object in the category of sheaves with compact support.
Denote by $\BDC(\mathrm{I}\C_X)$ the bounded derived category of ind-sheaves, and by $\BDC_{\mathrm{I}\Rc}(\mathrm{I}\C_X)$ the full subcategory of objects with ind-$\R$-constructible cohomologies. Denote by $\tens$, $\rihom$, $\opb f$, $\roim f$, $\reeim f$, $\epb f$ the six Grothendieck operations for ind-sheaves.

Denote by $\alpha$ the left adjoint of the embedding of sheaves into ind-sheaves. One has $\alpha(\smash{\indlim} F_i) = \smash{\ilim} F_i$. Denote by $\beta$ the left adjoint of $\alpha$.

Denote by $\Dbt_X$ the ind-$\R$-constructible sheaf of tempered distributions. 

Let $X$ be a complex manifold.
We set for short $d_X = \dim X$.

Denote by $\OO_X$ and $\D_X$ the rings of holomorphic functions and of differential operators. Denote by $\Omega_X$ the invertible sheaf of differential forms of top degree.

Denote by $\BDC(\D_X)$ the bounded derived category of left $\D_X$-modules, and by $\BDC_{\hol}(\D_X)$ and $\BDC_{\reghol}(\D_X)$ the full subcategories of objects with holonomic and regular holonomic cohomologies, respectively. Denote by $\dtens$, $\dopb f$, $\doim f$ the operations for $\D$-modules.  (Here $f\colon X\to Y$ is a morphism of complex manifolds.)

Denote by $\ddual\shm$ the dual of $\shm$ (with shift such that $\ddual\OO_X\simeq\OO_X$).

For $Z\subset X$ a closed analytic subset, we denote by $\rsect_{[Z]}\shm$ and $\shm(*Z)$ the relative algebraic cohomologies of a $\D_X$-module $\shm$.

Denote by $\ssv(\shm)\subset X$ the singular support of $\shm$, that is the set of points where the characteristic variety is not reduced to the zero-section.

Denote by $\OOt_X\in\BDC_{\mathrm{I}\Rc}(\mathrm{I}\C_X)$ the complex of tempered holomorphic functions. Recall that $\OOt_X$ is the Dolbeault complex of $\Dbt_X$ and that it has a structure of $\beta\D_X$-module. We will write for short $\rhom[\D_X](\shm,\OOt_X)$ instead of $\rihom[\beta\D_X](\beta\shm,\OOt_X)$.

\section{Exponential $\D$-modules}

Let $X$ be a complex analytic manifold.
Let $D\subset X$ be a hypersurface, and set $U=X\setminus D$.
For $\varphi\in\OO_X(*D)$, we set
\begin{align*}
\D_X e^\varphi &= \D_X/\{P\colon Pe^\varphi=0 \text{ on } U\}, \\
\she^\varphi_{D|X}&=(\D_X\, e^\varphi)(*D).
\end{align*}
As an $\OO_X(*D)$-module, $\she^\varphi_{D|X}$ is generated by $e^\varphi$.
Note that $\ssv(\she^\varphi_{D|X}) = D$, and $\she^\varphi_{D|X}$ is holonomic. It is regular if $\varphi\in\sho_X$, since then
$\she^\varphi_{D|X} \simeq \OO_X(*D)$. 

One easily checks that
$(\ddual\she^\varphi_{D|X})(*D) \simeq \she^{-\varphi}_{D|X}$.

\begin{proposition}
\label{pro:dualphi}
If $\dim X =1$, and $\varphi$ has an effective pole at every point of $D$, then
$\ddual\she^\varphi_{D|X} \simeq \she^{-\varphi}_{D|X}$.
\end{proposition}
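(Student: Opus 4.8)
The approach I would take is to combine the isomorphism $(\ddual\she^\varphi_{D|X})(*D)\simeq\she^{-\varphi}_{D|X}$ recalled above with a local analysis at the points of $D$ showing that, under the effective-pole hypothesis, $\she^\varphi_{D|X}$ admits no nonzero quotient supported on $D$. Since the assertion is local and, as $\dim X=1$, the set $D$ is discrete, I would fix $p\in D$ with a local coordinate $x$ vanishing at $p$, write $U=X\setminus D$, and recall that near $p$ one has $\she^\varphi_{D|X}=\OO_X(*D)\,e^\varphi$ with $\partial_x(h\,e^\varphi)=(h'+h\varphi')\,e^\varphi$. The crux is the following generation statement: for every nonzero local section $h$ of $\OO_X(*D)$ near $p$, one has $\D_X\cdot(h\,e^\varphi)=\she^\varphi_{D|X}$ in a neighborhood of $p$.

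I would prove this by controlling pole orders along $D$. Since $\varphi$ has an effective pole at $p$, of order $m\ge 1$, the form $\varphi'$ has a pole of order $m+1\ge 2$; hence for any nonzero $h$ the pole of $h\varphi'$ strictly dominates that of $h'$, so $\partial_x(h\,e^\varphi)$ has pole order at $p$ exactly $m+1$ larger than that of $h\,e^\varphi$, with nonvanishing leading coefficient. Iterating, $\D_X\cdot(h\,e^\varphi)$ contains sections of $\OO_X(*D)\,e^\varphi$ of unbounded pole order at $p$ whose leading coefficients are units, and multiplying these by $\OO_X$ one recovers all of $\OO_X(*D)\,e^\varphi$ near $p$; together with the fact that $h\,e^\varphi$ generates $\she^\varphi_{D|X}$ over $\OO_X$ away from $p$, this gives the generation statement. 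From it, $\she^\varphi_{D|X}$ has no nonzero quotient $Q$ supported on $D$: on such a $Q$ multiplication by $x$ is locally nilpotent, so $x^N\,e^\varphi$ lies in the kernel of $\she^\varphi_{D|X}\to Q$ for $N\gg 0$, and since $\D_X\cdot(x^N e^\varphi)=\she^\varphi_{D|X}$ near $p$ the kernel is everything, i.e.\ $Q=0$. The identical argument applies to $-\varphi$, which also has an effective pole at each point of $D$.

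To conclude, I would use the distinguished triangle $\rsect_{[D]}(\ddual\she^\varphi_{D|X})\to\ddual\she^\varphi_{D|X}\to\she^{-\varphi}_{D|X}\xrightarrow{+1}$ coming from $(\ddual\she^\varphi_{D|X})(*D)\simeq\she^{-\varphi}_{D|X}$. As $\ddual\she^\varphi_{D|X}$ is a holonomic module on a curve, $\rsect_{[D]}(\ddual\she^\varphi_{D|X})$ is concentrated in degrees $0$ and $1$, so the kernel of the map $\ddual\she^\varphi_{D|X}\to\she^{-\varphi}_{D|X}$ is a submodule of $\ddual\she^\varphi_{D|X}$ supported on $D$ and its cokernel is a quotient of $\she^{-\varphi}_{D|X}$ supported on $D$. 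By duality the former corresponds to a quotient of $\she^\varphi_{D|X}$ supported on $D$, which vanishes by the previous paragraph, while the latter vanishes directly by the previous paragraph applied to $-\varphi$; hence $\ddual\she^\varphi_{D|X}\xrightarrow{\,\sim\,}\she^{-\varphi}_{D|X}$. I expect the only genuine difficulty to be the pole-order estimate underlying the generation statement, and it is precisely there that the hypothesis is used: if $\varphi$ were holomorphic at some point of $D$ then $\she^\varphi_{D|X}\simeq\OO_X(*D)$ near that point, which does carry a quotient supported on $D$ (and a submodule isomorphic to $\OO_X$), and the conclusion fails.
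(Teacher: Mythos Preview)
The paper does not supply a proof of this proposition: it is stated without argument, in keeping with the Note's disclaimer that proofs are only sketched and details are deferred to a forthcoming paper. There is therefore no proof in the paper to compare your attempt against.

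That said, your argument is correct and is essentially the standard one. The pole-order computation is sound: writing $\varphi=c\,x^{-m}(1+O(x))$ with $m\ge 1$ and $c\neq 0$, one checks that $\partial_x^k(x^N e^\varphi)=P_k(x)\,e^\varphi$ with $P_k=(-mc)^k x^{\,N-k(m+1)}(1+O(x))$, so the $\OO_X$-module spanned by the $P_k$ exhausts $\OO_X(*0)$, giving $\D_X\cdot(x^N e^\varphi)=\she^\varphi_{0|X}$ for every $N\ge 0$. The passage from this to ``no nonzero quotient supported on $D$'' is exactly as you say, and the same holds for $-\varphi$. Your use of the triangle $\rsect_{[D]}\to\id\to(\cdot)(*D)\xrightarrow{+1}$ is also fine: on a curve the long exact sequence reduces to
\[
0\longrightarrow H^0_{[D]}(\ddual\she^\varphi_{D|X})\longrightarrow \ddual\she^\varphi_{D|X}\longrightarrow \she^{-\varphi}_{D|X}\longrightarrow H^1_{[D]}(\ddual\she^\varphi_{D|X})\longrightarrow 0,
\]
the kernel is a submodule of $\ddual\she^\varphi_{D|X}$ supported on $D$ (hence, by $t$-exactness of $\ddual$ on holonomic modules, dual to a quotient of $\she^\varphi_{D|X}$ supported on $D$, so zero), and the cokernel is a quotient of $\she^{-\varphi}_{D|X}$ supported on $D$ (so zero). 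Your closing remark that the hypothesis is sharp is also accurate.
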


Let $\PP$ be the complex projective line and
denote by $t$ the coordinate on $\C=\PP\setminus\{\infty\}$.

For $c\in\R$, we set for short
\begin{align*}
\{\RE \varphi <c\} =& \{x\in U\colon \RE \varphi(x) <c\}, \\
\{\RE(t+\varphi)<c\} =& \{(x,t) \colon x\in U,\ t\in\C, \RE(t+\varphi(x))<c\}.
\end{align*}
Consider the ind-$\R$-constructible sheaves on $X$ and on $\XP$, respectively,
\begin{align*}
\C_{\{\RE \varphi <?\}} &= \indlim[c\to+\infty]\C_{\{\RE \varphi <c\}}, \\
\C_{\{\RE(t+\varphi) <?\}} &= \indlim[c\to+\infty]\C_{\{\RE(t+\varphi) <c\}}.
\end{align*}

The following result is analogous to \cite[Proposition~7.1]{DAg12}.
Its proof is simpler than loc.\ cit., since $\varphi$ is differentiable.

\begin{proposition}
\label{pro:Ephi}
One has an isomorphism in $\BDC(\D_X)$
\[
\she^\varphi_{D|X} \isoto \roim q \rhom[\opb p\D_\PP](\opb p\she^t_{\infty|\PP}, 
\rhom(\C_{\{\RE(t+\varphi)<?\}},\OOt_\XP)),
\]
for $q$ and $p$ the projections from $\XP$.
\end{proposition}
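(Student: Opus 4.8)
The plan is to write the morphism down explicitly and then check that it is an isomorphism by a computation that is local on $X$.

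\emph{The morphism.} I would build it from $e^{t+\varphi}$, a holomorphic function on $U\times\C$. On $\{\RE(t+\varphi)<c\}$ one has $|e^{t+\varphi}|=e^{\RE(t+\varphi)}<e^c$; being bounded there, $e^{t+\varphi}$ is tempered along the whole boundary of $\{\RE(t+\varphi)<c\}$ in $\XP$, and so defines a global section of $N:=\rhom(\C_{\{\RE(t+\varphi)<?\}},\OOt_\XP)$. In the chart $s=1/t$ at $\infty$ one has $s^{-1}e^{1/s}=(-s\partial_s)e^{1/s}$, so $\D_\PP e^t$ is already localized along $\{\infty\}$, i.e.\ $\she^t_{\infty|\PP}=\D_\PP e^t$; hence $\rhom[\opb p\D_\PP](\opb p\she^t_{\infty|\PP},N)$ is, over $X\times\C$, the two-term complex $N\xrightarrow{\,\partial_t-1\,}N$ (and similarly near $X\times\{\infty\}$), into which $e^{t+\varphi}$ lifts because $(\partial_t-1)e^{t+\varphi}=0$. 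Finally $P\,e^{t+\varphi}=e^t\,(Pe^\varphi)$ for $P\in\D_X$, and the target is a module over $\OO_X(*D)$ since multiplying $e^{t+\varphi}$ by a section of $\OO_X(*D)$ preserves temperedness along $D\times\PP$; therefore $e^\varphi\mapsto e^{t+\varphi}$ descends from $\D_X$ to $\D_X e^\varphi$ and then, by localization, to $\she^\varphi_{D|X}$, yielding the morphism of the statement.

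\emph{Away from $D$.} The claim is local on $X$. Near $x_0\in U$ the function $\varphi$ is holomorphic, and the shear $\sigma\colon\XP\to\XP$, $(x,t)\mapsto(x,t+\varphi(x))$, is an automorphism over $X$ with $\opb\sigma\OOt_\XP\simeq\OOt_\XP$, $\opb\sigma\C_{\{\RE t<?\}}=\C_{\{\RE(t+\varphi)<?\}}$, $q\circ\sigma=q$, and compatible with the action of $\partial_t$; it thus reduces the statement near $x_0$ to the case $D=\emptyset$, $\varphi=0$. For that case, $\OO_X\isoto\roim q\rhom[\opb p\D_\PP](\opb p\she^t_{\infty|\PP},\rhom(\C_{\{\RE t<?\}},\OOt_\XP))$, I would compute the two-term complex directly --- a tempered Poincar\'e lemma on the half-planes $\{\RE t<c\}$, together with the behaviour at $\infty\in\PP$ dictated by the localization --- and then apply $\roim q$ ($q$ being proper), the higher cohomology cancelling and only $\OO_X$ surviving in degree $0$. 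It is worth recording that $\C_{\{\RE(t+\varphi)<?\}}$ is, up to a shift, the tempered-solution complex $\rhom[\D_\XP](\she^{t+\varphi}_{D'|\XP},\OOt_\XP)$ of the exponential module $\she^{t+\varphi}_{D'|\XP}\simeq\she^\varphi_{D|X}\etens\she^t_{\infty|\PP}$, with $D'=(D\times\PP)\cup(X\times\{\infty\})$, so the morphism above is nothing but the reconstruction morphism $\she^\varphi_{D|X}\to\Psi(\Phi(\she^\varphi_{D|X}))$ evaluated on this elementary object.

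\emph{Near $D$, and the main obstacle.} It remains to show that the cone of the morphism --- which by the previous step is supported on $D$ --- vanishes near each $x_0\in D$; this is the analytic heart of the proof. Here I would use that $\varphi$ is meromorphic, so that the level sets $\{\RE(t+\varphi)=c\}$ are smooth graphs $\{\RE t=c-\RE\varphi(x)\}$ --- the point that makes the estimates lighter than those of \cite[Proposition~7.1]{DAg12}, where $\varphi$ is merely continuous. One finds that $\ker(\partial_t-1)$ on $N$ near $D\times\PP$ is $e^{t+\varphi}$ times the space of holomorphic functions on $U$ that are tempered (not necessarily meromorphic) along $D$, which is far larger than $\she^\varphi_{D|X}$; the crux is then to show, by tempered division and Poincar\'e-type lemmas uniform in $c$, that this excess is exactly matched by the higher cohomology of $N\xrightarrow{\,\partial_t-1\,}N$ --- the latter being large precisely because of the irregularity of $e^{t+\varphi}$ --- so that $\roim q$ collapses the two-term complex to $\she^\varphi_{D|X}$ concentrated in degree $0$. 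Carrying out these uniform estimates, and controlling their interaction with the localization at $\infty$, is the step I expect to demand genuine work.
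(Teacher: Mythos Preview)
Your outline is sound and matches the paper's approach: the paper does not spell out a proof of this proposition but refers to \cite[Proposition~7.1]{DAg12}, remarking only that the argument simplifies here because $\varphi$ is differentiable---precisely the point you exploit when you note that the level sets $\{\RE(t+\varphi)=c\}$ are smooth graphs. Your explicit construction of the morphism via $e^{t+\varphi}$, the shear reduction over $U$, and the identification of the analytic work near $D$ as uniform tempered estimates for the two-term complex $N\xrightarrow{\partial_t-1}N$ are exactly in line with that reference.
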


The following result is analogous to \cite[Proposition~7.3]{KS03}.

\begin{lemma}
\label{lem:s/t}
Denote by $(u,v)$ the coordinates in $\C^2$.
There is an isomorphism in $\BDC(\mathrm{I}\C_{\C^2})$
\[
\rhom[\D_{\C^2}](\she^{u/v}_{\{v=0\}|\C^2},\OOt_{\C^2}) \simeq
\rihom(\C_{\{v\neq 0\}},\C_{\{\RE u/v < ?\}}).
\]
\end{lemma}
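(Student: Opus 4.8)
The plan is to reduce the statement to the one-variable computation of \cite[Proposition~7.3]{KS03} by means of the functoriality of the tempered solution functor under proper direct images and under smooth pull-backs. Recall first that in one variable $\rhom[\D_\PP](\she^t_{\infty|\PP},\OOt_\PP)\simeq\rihom(\C_{\PP\setminus\{\infty\}},\C_{\{\RE t<?\}})$; this follows from \cite[Proposition~7.3]{KS03} after the substitution $t=1/x$, together with the fact that, away from $\infty$, the module $\she^t_{\infty|\PP}$ is the connection $\OO_\C e^t$, whose tempered solution complex is $\C_{\{\RE t<?\}}$ on $\C$ since $e^t$ is tempered at $\infty$ exactly where $\RE t$ is bounded from above.

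Let $\varrho\colon\widetilde{\C^2}\to\C^2$ be the blowing-up of $\C^2$ at the origin, let $E=\varrho^{-1}(0)$ be the exceptional curve, and let $\mu\colon\widetilde{\C^2}\to\PP$ be the resulting fibration over $\PP$ --- a submersion --- normalised so that $t\circ\mu=u/v$ outside $\varrho^{-1}(\{v=0\})$. Set $\widetilde D=\mu^{-1}(\infty)$, the strict transform of $\{v=0\}$, so that $\varrho^{-1}(\{v=0\})=\widetilde D\cup E$. Three facts are needed. \textbf{(a)} One has $\doim\varrho\OO_{\widetilde{\C^2}}\simeq\OO_{\C^2}$ up to a summand supported at the origin; such a summand is annihilated by $\dtens$-ing with a $(*\{v=0\})$-module, so the projection formula gives $\she^{u/v}_{\{v=0\}|\C^2}\simeq\doim\varrho\,\dopb\varrho\,\she^{u/v}_{\{v=0\}|\C^2}$. \textbf{(b)} Inverse image commutes with localisation along a hypersurface and sends $\she^\varphi_{D|X}$ to $\she^{\varphi\circ f}_{f^{-1}D|Y}$ (a holonomic $(*Z)$-module being determined by its restriction to the complement of $Z$); hence $\dopb\varrho\,\she^{u/v}_{\{v=0\}|\C^2}\simeq\she^{u/v}_{\widetilde D\cup E|\widetilde{\C^2}}\simeq(\dopb\mu\,\she^t_{\infty|\PP})(*E)$. \textbf{(c)} As $\mu$ is smooth, $\opb\mu$ is exact and commutes with $\rihom$, and the tempered solution functor is compatible with smooth pull-backs: $\rhom[\D_{\widetilde{\C^2}}](\dopb\mu\,\she^t_{\infty|\PP},\OOt_{\widetilde{\C^2}})\simeq\opb\mu\,\rhom[\D_\PP](\she^t_{\infty|\PP},\OOt_\PP)$, up to a shift by the fibre dimension.

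Now chain these. Using \textbf{(a)} and the compatibility of tempered solutions with the proper map $\varrho$,
\[
\rhom[\D_{\C^2}](\she^{u/v}_{\{v=0\}|\C^2},\OOt_{\C^2})\simeq\roim\varrho\,\rhom[\D_{\widetilde{\C^2}}](\dopb\varrho\,\she^{u/v}_{\{v=0\}|\C^2},\OOt_{\widetilde{\C^2}}).
\]
By \textbf{(b)}, the compatibility with $(*E)$-localisation, \textbf{(c)}, and the one-variable formula, the object under $\roim\varrho$ is
\[
\rihom(\C_{\widetilde{\C^2}\setminus E},\opb\mu\,\rihom(\C_{\PP\setminus\{\infty\}},\C_{\{\RE t<?\}}))
\simeq\rihom(\C_{\widetilde{\C^2}\setminus(E\cup\widetilde D)},\C_{\{\RE u/v<?\}}).
\]
Since $\varrho$ restricts to an isomorphism $\widetilde{\C^2}\setminus(E\cup\widetilde D)\isoto\{v\neq0\}$ and $\{\RE(u/v)<c\}\subset\{v\neq0\}$ for every $c$, and since $\rihom(\C_V,-)\simeq\roim{j_V}\opb{j_V}$ for an open embedding $j_V$, applying $\roim\varrho$ yields $\rihom(\C_{\{v\neq0\}},\C_{\{\RE u/v<?\}})$, which is the claim.

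The main obstacle --- and the reason for passing through the blowing-up --- is the turning point of $e^{u/v}$ at $(0,0)$: there the exponential factor degenerates, the map $(u,v)\mapsto u/v$ is not defined, and a direct stalkwise comparison is unavailable. The blowing-up separates $\widetilde D$ (along which $\mu$ reaches $\infty$) from $E$, and, crucially, the inverse image of the \emph{localised} module $\she^{u/v}_{\{v=0\}|\C^2}$ is itself localised along the whole of $\widetilde D\cup E$, so that nothing survives along $E$ and the final $\roim\varrho$ reduces to the trivial direct image of a sheaf living on $\{v\neq0\}$. What remains to be checked with care is the normalisation (shifts and orientation twists) in the proper-direct-image and smooth-pull-back compatibilities for $\OOt$, and the vanishing of the origin-supported summand of $\doim\varrho\OO_{\widetilde{\C^2}}$ after $\dtens$-ing with $\she^{u/v}_{\{v=0\}|\C^2}$. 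Alternatively, one could bypass the blowing-up and compute the Koszul--de Rham complex $\rhom[\D_{\C^2}](\she^{u/v}_{\{v=0\}|\C^2},\OOt_{\C^2})$ directly, as in \cite{KS03}, via tempered $\overline\partial$-estimates split according to the size of $\RE(u/v)$.
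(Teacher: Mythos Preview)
The paper does not actually prove this lemma: it only remarks that the result is ``analogous to \cite[Proposition~7.3]{KS03}'', which is the explicit one-variable computation of the tempered solutions of $\D_\C e^{1/x}$ via $\overline\partial$-estimates. The intended argument is therefore the direct two-variable analogue of that analytic computation --- precisely the alternative you mention in your last sentence.

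Your route is genuinely different and essentially correct. Instead of redoing the estimates in two variables, you reduce to the one-variable result of \cite{KS03} by blowing up the turning point $(0,0)$, so that the meromorphic function $u/v$ becomes the smooth map $\mu\colon\widetilde{\C^2}\to\PP$, and then you use the functoriality of tempered solutions under the proper map $\varrho$ and the submersion $\mu$, together with $(*E)$-localisation. A few comments. In \textbf{(a)} one has in fact $\doim\varrho\OO_{\widetilde{\C^2}}\simeq\OO_{\C^2}\oplus\mathcal B_{\{0\}|\C^2}[\,\cdot\,]$, and your observation that the second summand dies after $\dtens$ with a $(*\{v=0\})$-module is correct (a $(*\{v=0\})$-module supported at the origin vanishes). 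In \textbf{(c)}, the inverse-image compatibility you need is, after passing to the de~Rham side by duality, exactly the statement \cite[Theorem~7.4.1]{KS01} that the paper itself invokes in the proof of Proposition~\ref{pro:Solphi}; since $\mu$ is smooth, $\epb\mu\simeq\opb\mu[2]$ and the shifts cancel against the relative dimension, as you anticipate. So there is no missing ingredient and no circularity.

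What each approach buys: yours is more conceptual and avoids any new analysis, at the cost of keeping track of the projection formula, the blow-up geometry, and the shift bookkeeping you flag. The paper's implied approach is self-contained analysis in two variables, parallel to \cite{KS03}, and yields the lemma as a standalone computation without invoking the functorial machinery.
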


\begin{proposition}
\label{pro:Solphi}
There is an isomorphism in $\BDC(\mathrm{I}\C_X)$
\[
\rhom[\D_X](\ddual\she^{-\varphi}_{D|X},\OOt_X) \simeq \rihom(\C_U,\C_{\{\RE \varphi < ?\}}).
\]
\end{proposition}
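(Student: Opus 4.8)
The plan is to deduce the statement from Lemma~\ref{lem:s/t}, after a reduction to a normal form which uses a resolution of $\varphi$ together with the functoriality of $\Phi^{\mathsf t}$ with respect to proper direct images (\S\ref{se:funct}). Throughout, one regards both sides of the claimed isomorphism as objects of $\BDC(\mathrm{I}\C_X)$ and checks the isomorphism locally.

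\emph{Step 1: reduction to a normal form.} By resolution of singularities one chooses a projective modification $\pi\colon X'\to X$ that is an isomorphism over $U$ and such that $D'=\opb\pi D$ is a normal crossing divisor and, locally on $X'$, $\varphi\circ\pi$ is a unit times a monomial in the local equations of the components of $D'$. Since $\pi$ is an isomorphism over $U$, one has $\she^{-\varphi}_{D|X}\isoto\doim\pi\,\she^{-\varphi\circ\pi}_{D'|X'}$; as $\ddual$ commutes with $\doim\pi$ and $\Phi^{\mathsf t}$ commutes with proper direct images, the left-hand side on $X$ is the direct image by $\pi$ of $\rhom[\D_{X'}](\ddual\she^{-\varphi\circ\pi}_{D'|X'},\OOt_{X'})$. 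On the right-hand side, $\{\RE\varphi<c\}\subset U$ gives $\C_{\{\RE(\varphi\circ\pi)<c\}}=\opb\pi\,\C_{\{\RE\varphi<c\}}$ and hence $\roim\pi\,\C_{\{\RE(\varphi\circ\pi)<c\}}\simeq\C_{\{\RE\varphi<c\}}$ (the stalks of $\C_{\{\RE\varphi<c\}}$ at points of $D$ being zero); using the adjunction $\roim\pi\,\rihom(\opb\pi F,G)\simeq\rihom(F,\roim\pi G)$ one gets $\roim\pi\,\rihom(\C_{\opb\pi U},\C_{\{\RE(\varphi\circ\pi)<?\}})\simeq\rihom(\C_U,\C_{\{\RE\varphi<?\}})$. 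One may therefore assume that $D$ is a normal crossing divisor and $\varphi$ a unit times a monomial in the equations of its components.

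\emph{Step 2: the normal-form case.} The statement is now local; write $D=\bigcup_{i\in I}\{x_i=0\}$ and $\varphi=a\prod_{i\in J}x_i^{-m_i}$ with $a$ a unit and $m_i>0$. On the right, $\rihom(\C_U,\C_{\{\RE\varphi<?\}})$ is described on the real oriented blow-up of $X$ along the branches $\{x_i=0\}$ $(i\in J)$, on which $\{\RE\varphi<c\}$ becomes a locally closed sectorial subset. On the left, one uses the localization triangle with terms $\rsect_{[D]}(\ddual\she^{-\varphi}_{D|X})$, $\ddual\she^{-\varphi}_{D|X}$ and $\she^\varphi_{D|X}$, the last identification being $(\ddual\she^{-\varphi}_{D|X})(*D)\simeq\she^\varphi_{D|X}$; applying $\Phi^{\mathsf t}$, the term $\Phi^{\mathsf t}(\she^\varphi_{D|X})$ is computed by an argument in the spirit of (and locally reducing to) Lemma~\ref{lem:s/t}, while the term $\Phi^{\mathsf t}(\rsect_{[D]}(\ddual\she^{-\varphi}_{D|X}))$, being supported on $D$, is computed by an elementary argument — for $i\notin J$ this is the classical Riemann-Hilbert correspondence for $\OO_X(*\{x_i=0\})$, and for $i\in J$ it follows from Proposition~\ref{pro:dualphi}. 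Gluing the two along the triangle and comparing with the blow-up description of the right-hand side yields the isomorphism.

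The hard part will be Step~2 when the pole of $\varphi$ lies along several branches of $D$, or is of order $>1$: there $\she^\varphi_{D|X}$ is no longer an external tensor product, the sectorial regions $\{\RE\varphi<c\}$ acquire genuinely higher-dimensional combinatorics on the blow-up, and the contributions supported on $D$ created on passing from $\she^\varphi_{D|X}$ to $\ddual\she^{-\varphi}_{D|X}$ must be tracked with care. Everything else — the reduction of Step~1 and the gluing — is formal, resting on Lemma~\ref{lem:s/t}, Proposition~\ref{pro:dualphi} and the functoriality of \S\ref{se:funct}.
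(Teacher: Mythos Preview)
Your Step~2 is not a proof but a promissory note: after the resolution you still face $\varphi=a\prod_{i\in J}x_i^{-m_i}$ with $|J|>1$ or $m_i>1$, and you yourself say this is ``the hard part'' without carrying it out. The phrase ``an argument in the spirit of (and locally reducing to) Lemma~\ref{lem:s/t}'' hides the whole difficulty, since Lemma~\ref{lem:s/t} is about the single function $u/v$ with a simple pole along one hyperplane and does not obviously cover a general monomial. Your invocation of Proposition~\ref{pro:dualphi} is also illegitimate here: that statement is only for $\dim X=1$, whereas after your reduction $X$ still has its original dimension.

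The paper avoids all of this by going in the opposite direction: instead of resolving $X$ and pushing forward, it maps $X$ \emph{into} $\C^2$ and pulls back. Locally write $\varphi=a/b$ with $a,b\in\OO_X$ and $b^{-1}(0)\subset D$, and set $f=(a,b)\colon X\to\C^2$. Then $\dopb f\she^{-u/v}_{\{v=0\}|\C^2}\simeq\she^{-\varphi}_{D|X}$, and the compatibility of $\OOvt\ltens[\D](-)$ with $\dopb f$ from \cite[Theorem~7.4.1]{KS01} transports Lemma~\ref{lem:s/t} (rewritten via $\ddual\she^{u/v}\simeq\she^{-u/v}$) directly to $X$:
\[
\OOvt_X \ltens[\D_X] \she^{-\varphi}_{D|X}[-d_X] \simeq \rihom(\C_U,\C_{\{\RE\varphi<?\}}).
\]
A final duality identifies the left side with $\rhom[\D_X](\ddual\she^{-\varphi}_{D|X},\OOt_X)$. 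No resolution, no blow-up, no case analysis on the pole order: the universal meromorphic function $u/v$ on $\C^2$ absorbs all the geometry, and inverse-image functoriality is what is needed, not the direct-image functoriality of \S\ref{se:funct}.
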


\begin{proof}
As $\ddual\she^{u/v}_{\{v=0\}|\C^2} \simeq \she^{-u/v}_{\{v=0\}|\C^2}$, Lemma~\ref{lem:s/t} gives
\[
\OOvt_{\C^2} \ltens[\D_{\C^2}] \she^{-u/v}_{\{v=0\}|\C^2}[-2] \simeq  \rihom(\C_{\{v\neq 0\}},\C_{\{\RE u/v < ?\}}).
\]
Write $\varphi=a/b$ for $a,b\in\OO_X$ such that $b^{-1}(0) \subset D$, and consider the map
\[
f = (a,b) \colon X\to \C^2.
\]
As $\dopb f\she^{-u/v}_{\{v=0\}|\C^2} \simeq \she^{-\varphi}_{D|X}$, \cite[Theorem~7.4.1]{KS01} implies
\[
\OOvt_X \ltens[\D_X] \she^{-\varphi}_{D|X} [-d_X]
\simeq \rihom(\C_U,\C_{\{\RE \varphi < ?\}}).
\]
Finally, one has
\[
\OOvt_X \ltens[\D_X] \she^{-\varphi}_{D|X} [-d_X]
\simeq
\rhom[\D_X](\ddual\she^{-\varphi}_{D|X},\OOt_X).
\]
\end{proof}

\section{A correspondence}\label{se:corr}

Let $X$ be a complex analytic manifold.
Recall that $\PP$ denotes the complex projective line.
Consider the contravariant functors
\[
\xymatrix{
\BDC(\D_X) \ar@<.5ex>[r]^-\Phi 
& \BDC(\mathrm{I}\C_\XP) \ar@<.5ex>[l]^-\Psi
}
\]
defined by
\begin{align*}
\Phi(\shm) &= \rhom[\D_\XP](\shm\detens\she^t_{\infty|\PP},\OOt_\XP), \\
\Psi(F) &= \roim q \rhom[\opb p\D_\PP](\opb p\she^t_{\infty|\PP}, \rhom(F,\OOt_\XP)),
\end{align*} 
for $q$ and $p$ the projections from $\XP$.

We conjecture that this provides a Riemann-Hilbert correspondence for holonomic systems:

\begin{conjecture}
\label{con:main}
\begin{itemize}
\item [(i)]
The natural morphism of endofunctors of $\BDC(\D_X)$
\begin{equation}
\label{eq:IdPsiPhi}
\id \to\Psi \circ \Phi
\end{equation}
is an isomorphism on $\BDC_\hol(\D_X)$.
\item [(ii)]
The restriction of $\Phi$
\[
\Phi|_{\BDC_\hol(\D_X)} \colon \BDC_\hol(\D_X) \to \BDC(\mathrm{I}\C_\XP)
\]
is fully faithful.
\end{itemize}
\end{conjecture}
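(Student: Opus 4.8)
The plan is to reduce both statements, by d\'evissage, to two elementary cases — the exponential modules $\she^\varphi_{D|X}$ and the regular holonomic modules — where $\Phi$ and $\Psi$ can be computed by hand through Propositions~\ref{pro:Ephi} and~\ref{pro:Solphi} and the classical correspondence $(\Phi^0,\Psi^0)$. When $X$ is a curve this yields the reconstruction statement~(i), which is the content of \S\ref{se:reconstruction}; in arbitrary dimension it would give the conjecture in full.

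The d\'evissage rests on the functoriality of $\Phi$ and $\Psi$ (\S\ref{se:funct}): their compatibility with proper direct images $\doim f$ and with the operation $\shm\mapsto\shm\dtens\shr$ for $\shr$ regular holonomic, in a way compatible with the natural transformation~\eqref{eq:IdPsiPhi}. Granting this, the full subcategory of those $\shm\in\BDC_\hol(\D_X)$ for which~\eqref{eq:IdPsiPhi} is an isomorphism is triangulated, thick, and stable under proper pushforward and under twist by regular modules. When $X$ is a curve, the Levelt--Turrittin theorem — combined with the fact that a finite ramification is proper and generically \'etale, so that by the first functoriality one may pass to a ramified cover on which $\shm$ acquires a good formal structure — reduces the claim that~\eqref{eq:IdPsiPhi} is an isomorphism to the cases of regular holonomic modules and of modules $\she^\varphi_{D|X}\dtens\shr$, the latter reducing to $\she^\varphi_{D|X}$ itself by the second functoriality. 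In arbitrary dimension one would invoke instead the Mochizuki--Kedlaya structure theorem on a suitable projective modification, and it is here that the first serious obstacle lies: the control of turning points and of the real blow-ups needed to obtain a good formal structure.

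It then remains to treat the two building blocks. For $\shm$ regular holonomic, $\Phi$ and $\Psi$ reduce — after the bookkeeping needed to account for the extra factor $\PP$ and the module $\she^t_{\infty|\PP}$ on it — to the classical solution functor $\Phi^0(\shm)=\rhom[\D_X](\shm,\OO_X)$ and to its quasi-inverse $\Psi^0$, so that $\shm\isoto\Psi\Phi(\shm)$ follows from $\shm\simeq\Psi^0\Phi^0(\shm)$. For $\shm=\she^\varphi_{D|X}$ — where, having split off the regular summand, we may assume $\varphi$ has an effective pole at every point of $D$ — one first identifies the external tensor product $\she^\varphi_{D|X}\detens\she^t_{\infty|\PP}$ with $\she^{t+\varphi}_{D'|\XP}$, for $D'=(D\times\PP)\cup(X\times\{\infty\})$. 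When $X$ is a curve, Proposition~\ref{pro:dualphi}, applied on the curves $X$ and $\PP$, together with the compatibility of $\ddual$ with external tensor products, gives $\she^{t+\varphi}_{D'|\XP}\simeq\ddual\she^{-(t+\varphi)}_{D'|\XP}$, so that Proposition~\ref{pro:Solphi} applies on $\XP$ and yields
\[
\Phi(\she^\varphi_{D|X})\simeq\rihom(\C_{(X\setminus D)\times\C},\C_{\{\RE(t+\varphi)<?\}})\simeq\C_{\{\RE(t+\varphi)<?\}},
\]
the auxiliary term disappearing because the sets $\{\RE(t+\varphi)<c\}$ are contained in $(X\setminus D)\times\C$. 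Finally, Proposition~\ref{pro:Ephi} identifies $\Psi$ of this ind-sheaf with $\she^\varphi_{D|X}$, which is~\eqref{eq:IdPsiPhi} on this elementary piece. (In higher dimension one would need, at this step, a version of Proposition~\ref{pro:dualphi} valid beyond curves.)

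For~(ii): once~(i) is known, $\Psi\circ\Phi\simeq\id$ on $\BDC_\hol(\D_X)$ shows at once that $\Phi$ is faithful and that $\mathrm{Hom}(\shm,\shn)\to\mathrm{Hom}(\Phi(\shn),\Phi(\shm))$ is split injective for all holonomic $\shm,\shn$. Fullness does not follow formally, and this is the main difficulty. I would establish it by the same d\'evissage, reducing to the computation of $\mathrm{Hom}$ in $\BDC(\mathrm{I}\C_\XP)$ between ind-sheaves of the form $\C_{\{\RE(t+\varphi)<?\}}$ twisted by regular local systems, and to matching it with $\mathrm{Hom}$ of the corresponding holonomic modules; concretely, for a curve this amounts to recovering from the ind-sheaf $\Phi(\shm)$ the Stokes-filtered local system attached to $\shm$, i.e.\ the full Stokes data — the phenomenon illustrated on an example in \S\ref{se:Stokes}. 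The delicate analytic input throughout is the commutation of $\Psi$ with proper direct images, which rests on a relative vanishing, or K\"unneth, statement for tempered holomorphic cohomology along the fibres of $q\colon\XP\to X$; together with the behaviour of $\Phi$ near turning points in the higher-dimensional setting, this is what keeps the statement, for the moment, a conjecture.
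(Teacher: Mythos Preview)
Your d\'evissage contains a genuine gap at the step ``good formal structure $\Rightarrow$ reduce to $\she^{\varphi_i}_{0|X}\dtens\shl_i$''. The isomorphism~\eqref{eq:fM} is only \emph{formal}: it does \emph{not} imply that $\shm$ sits in a triangle whose pieces are the $\shl_i\dtens\she^{\varphi_i}_{0|X}$, so the triangulated/thick closure argument you invoke does not apply. This is precisely the Stokes phenomenon. The paper's proof of Theorem~\ref{thm:main} inserts an extra step you omit: one passes to the real oriented blow-up $\pi\colon B\to X$, uses Lemma~\ref{lem:pipi} ($\shm\simeq\roim\pi\pi^*\shm$) together with an analogue of Proposition~\ref{pro:proper} for $\pi$ to replace $\shm$ by $\pi^*\shm$, and only then does Hukuhara--Turrittin (Theorem~\ref{thm:blow}) give \emph{actual} local isomorphisms at each boundary point $(0,\theta)$, which is what legitimizes the reduction to $\she^{\varphi}_{0|X}$. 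Without this passage through $\sha_Y$-modules your reduction is unjustified.

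There is also a computational slip in the elementary case. You assert $\rihom(\C_{U'},\C_{\{\RE(t+\varphi)<?\}})\simeq\C_{\{\RE(t+\varphi)<?\}}$ on the grounds that $\{\RE(t+\varphi)<c\}\subset U'$; but for $U'$ open one has $\rihom(\C_{U'},F)\not\simeq F$ in general even when $F$ is supported in $U'$ --- there is a contribution in degree~$1$ along $D'$, as the Example following Theorem~\ref{thm:main} shows explicitly ($H^1\Phi(\she^{1/x}_{0|\C})\neq 0$). The paper does not claim this simplification; instead it checks that $\Phi(\she^\varphi_{0|X})\tens\C_{D'}$ is $\C$-constructible and killed by $\Psi$, so that $\Psi(\Phi(\she^\varphi_{0|X}))\simeq\Psi(\C_{\{\RE(t+\varphi)<?\}})$, and only then applies Proposition~\ref{pro:Ephi}.
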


Let us prove some results in this direction.

\section{Functorial properties}\label{se:funct}

The next two Propositions are easily deduced from the results in~\cite{KS01}.

\begin{proposition}
\label{pro:proper}
Let $f\colon X\to Y$ be a proper map, and set $f_\PP = f\times\id_\PP$. 
Let $\shm \in\BDC_\hol(\D_X)$ and $F\in\BDC_{\mathrm{I}\Rc}(\mathrm{I}\C_\XP)$.
Then
\begin{align*}
\Phi( \doim f\shm ) & \simeq \reeim{f_\PP} \Phi(\shm) [d_X-d_Y],\\
\Psi( \reeim {f_\PP} F ) & \simeq \doim f \Psi(F) [d_X-d_Y].
\end{align*}
\end{proposition}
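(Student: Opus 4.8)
The plan is to reduce both isomorphisms to the behaviour of the functors $\rhom[\D]{(-)}{\OOt}$ and $\rhom{(-)}{\OOt}$ under proper direct image, which is exactly the content of the compatibility results for tempered holomorphic solutions established in \cite{KS01} (notably \cite[Theorem~7.4.1, 7.4.6]{KS01}) together with the projection formula and base change for ind-sheaves. First I would treat the $\Phi$ statement. Writing $f_\PP=f\times\id_\PP$ and noting that $f_\PP$ is proper, one has $\doim f\shm\detens\she^t_{\infty|\PP}\simeq\doim{f_\PP}(\shm\detens\she^t_{\infty|\PP})$, since $\she^t_{\infty|\PP}$ is pulled back from $\PP$ and the external tensor product commutes with the (partial) direct image. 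Applying the adjunction/compatibility formula of \cite{KS01} that relates $\rhom[\D]{\doim{f_\PP}(-)}{\OOt_{Y\times\PP}}$ with $\reeim{f_\PP}\rhom[\D]{(-)}{\OOt_{\XP}}$ — this is where the shift $[d_X-d_Y]$ comes from, via the transfer bimodule and the relative dualizing complex — yields $\Phi(\doim f\shm)\simeq\reeim{f_\PP}\Phi(\shm)[d_X-d_Y]$.

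For the $\Psi$ statement I would unwind the definition
\[
\Psi(\reeim{f_\PP}F)=\roim{q_Y}\rhom[\opb{p_Y}\D_\PP]{\opb{p_Y}\she^t_{\infty|\PP}}{\rhom{\reeim{f_\PP}F}{\OOt_{Y\times\PP}}},
\]
where $p_Y,q_Y$ are the projections from $Y\times\PP$. The key point is the ind-sheaf identity $\rhom{\reeim{f_\PP}F}{\OOt_{Y\times\PP}}\simeq\roim{f_\PP}\rhom{F}{\epb{f_\PP}\OOt_{Y\times\PP}}$ by adjunction, combined with the fact that $\epb{f_\PP}\OOt_{Y\times\PP}\simeq\OOt_{\XP}\ltens\omega_{X\times\PP/Y\times\PP}\simeq\OOt_\XP[2(d_X-d_Y)]$ (the relative orientation sheaf for a morphism of complex manifolds is concentrated in degree $-2(d_X-d_Y)$), which is precisely the tempered version of the compatibility of $\OOt$ with inverse image under smooth/proper morphisms from \cite{KS01}. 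Since $\opb{p_Y}\she^t_{\infty|\PP}$ and $q_Y$ are compatible with $f_\PP$ via the obvious cartesian squares ($p_X=p_Y\circ f_\PP$, and $f\circ q_X=q_Y\circ f_\PP$), one moves $\roim{f_\PP}$ (equivalently $\doim f$, $f$ being proper) outside using $\roim{q_Y}\roim{f_\PP}\simeq\roim f\roim{q_X}$ and the fact that $\rhom[\opb\D_\PP]{-}{-}$ computed fibrewise commutes with $\roim{f_\PP}$ by the projection formula, arriving at $\Psi(\reeim{f_\PP}F)\simeq\doim f\Psi(F)[d_X-d_Y]$.

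The main obstacle I anticipate is the careful bookkeeping of the two shifts and the dualizing/orientation data: one must verify that the shift produced on the $\Phi$ side (from the transfer module $\D_{X\to Y}$ and Serre-type duality for $\doim f$) matches the one produced on the $\Psi$ side (from $\epb{f_\PP}$ acting on $\OOt$), and that with the normalizations fixed in the excerpt — in particular $\ddual\OO_X\simeq\OO_X$ and $\alpha\OOt_X\simeq\OO_X$ — both equal $[d_X-d_Y]$ and not $[2(d_X-d_Y)]$ or $[d_Y-d_X]$; the factor of two is absorbed because $\OOt$ is a Dolbeault complex, i.e. the complex conjugate half of $\epb{f_\PP}$ is already built into $\OOt_\XP$ versus $\OOt_{Y\times\PP}$. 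Once the relevant statements of \cite{KS01} are invoked with their precise shifts, everything else is a formal diagram chase with the six operations, so the proof is genuinely ``easily deduced'' as claimed — the content is entirely in \cite{KS01}, and here one only has to package it through the external product with $\she^t_{\infty|\PP}$, which is inert under $f_\PP$.
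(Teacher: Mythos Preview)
Your approach is the intended one: the paper offers no proof beyond the remark that both isomorphisms are ``easily deduced from the results in~\cite{KS01}'', and your reduction to the compatibility theorems for $\OOt$ from that reference is exactly this. The treatment of $\Phi$ is correct.

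For $\Psi$, however, your key formula $\epb{f_\PP}\OOt_{Y\times\PP}\simeq\OOt_\XP[2(d_X-d_Y)]$ is false when $f$ is not smooth, and the proposition only assumes $f$ proper (the application in \S\ref{se:reconstruction} is to a ramification, which fails to be smooth at~$0$). For instance, when $Y$ is a point your formula would assert $\C_X[2d_X]\simeq\OOt_X[2d_X]$. The results in~\cite{KS01} relating $\OOt$ (or rather $\OOvt$) on $X$ and on $Y$ always involve the transfer bimodule $\D_{X\to Y}$ or $\D_{Y\leftarrow X}$, and it is precisely this bimodule, combined with $\roim{f_\PP}$, that manufactures the $\D$-module direct image $\doim f$ together with the single shift $[d_X-d_Y]$. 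Concretely, the relevant statement for $\Psi$ is the $\D_Y$-linear isomorphism $\doim f\rhom(G,\OOt_X)\simeq\rhom(\reeim f G,\OOt_Y)[d_X-d_Y]$ for $f$ proper and $G$ ind-$\R$-constructible; with this in hand (together with your observations $\opb{p_X}\she^t_{\infty|\PP}\simeq\opb{f_\PP}\opb{p_Y}\she^t_{\infty|\PP}$ and $q_Y\circ f_\PP=f\circ q_X$), the rest of your argument goes through unchanged. Your discussion of how ``the factor of two is absorbed'' is therefore misplaced: there is no factor of two to absorb once the transfer bimodule is correctly accounted for.
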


For $\shl\in\BDC_\reghol(\D_X)$, set
\[
\Phi^0(\shl) = \rhom[\D_X](\shl,\OO_X).
\]
Recall that $\Phi^0(\shl)$ is a $\C$-constructible complex of sheaves on $X$.

\begin{proposition}
\label{pro:tens}
Let $\shl\in\BDC_\reghol(\D_X)$, $\shm \in\BDC_\hol(\D_X)$ and $F\in\BDC_{\mathrm{I}\Rc}(\mathrm{I}\C_\XP)$.
Then
\begin{align*}
\Phi( \ddual(\shl\dtens\ddual\shm) ) & \simeq \rihom(\opb q \Phi^0(\shl),\Phi(\shm)) ,\\
\Psi( F \tens \opb q \Phi^0(\shl) ) & \simeq \Psi(F) \dtens \shl.
\end{align*}
\end{proposition}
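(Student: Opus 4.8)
The plan is to reduce both isomorphisms, after some external‑tensor bookkeeping, to a single non‑formal input from~\cite{KS01}: the compatibility of tempered holomorphic functions with tensor product by a regular holonomic module. Precisely, for $\shl$ regular holonomic on a complex manifold $Y$ one has a $\beta\D_Y$‑linear isomorphism
\[
\OOt_Y\dtens\shl\simeq\rihom\bigl(\Phi^0(\shl),\OOt_Y\bigr),
\]
matching the classical Riemann--Hilbert correspondence through the comparison $\rhom[\D_Y](\shl,\OOt_Y)\simeq\rhom[\D_Y](\shl,\OO_Y)=\Phi^0(\shl)$ that holds because $\shl$ is regular. This is genuinely a tempered statement — there is no such identity with $\OO$ in place of $\OOt$, already $\rihom(\C_{Y\setminus Z},\OO_Y)\simeq\roim j\,\OO_{Y\setminus Z}$ (for $j$ the open inclusion) differing from $\OO_Y(*Z)$, whereas $\rihom(\C_{Y\setminus Z},\OOt_Y)\simeq\OOt_Y(*Z)$ — and it is the place where~\cite{KS01} really enters. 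I would also use throughout that $\dopb q$ of a regular holonomic module by the smooth, proper map $q\colon\XP\to X$ is again regular holonomic, that $\Phi^0$ intertwines $\dopb q$ with $\opb q$ up to a shift, that $\she^t_{\infty|\PP}$ is $\D_\PP$‑coherent hence locally perfect, and — for the first isomorphism — that $\ddual_\PP\she^t_{\infty|\PP}\simeq\she^{-t}_{\infty|\PP}$ (Proposition~\ref{pro:dualphi}, since $t$ has an effective pole at $\infty$).

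For the second isomorphism, put $L=\Phi^0(\shl)$. Using tensor--hom adjunction and that $\opb q L$ carries no $\opb p\D_\PP$‑module structure (so $\rihom(\opb q L,-)$ commutes with $\rhom[\opb p\D_\PP](\opb p\she^t_{\infty|\PP},-)$), one gets $\Psi(F\tens\opb q L)\simeq\roim q\,\rihom(\opb q L,\mathcal G)$ with $\mathcal G:=\rhom[\opb p\D_\PP](\opb p\she^t_{\infty|\PP},\rihom(F,\OOt_\XP))$ and $\Psi(F)=\roim q\,\mathcal G$; then the $(\opb q,\roim q)$‑adjunction yields $\Psi(F\tens\opb q L)\simeq\rihom(L,\Psi(F))$. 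It remains to identify $\rihom(\Phi^0(\shl),\Psi(F))$ with $\Psi(F)\dtens\shl$, which I would do working before the pushforward: since $\PP$ is compact the map $q$ is proper, so the projection formula gives $\Psi(F)\dtens\shl\simeq\roim q(\mathcal G\dtens\dopb q\shl)$ up to shift; the twist $\dtens\dopb q\shl$ moves inside $\rhom[\opb p\D_\PP](\opb p\she^t_{\infty|\PP},-)$ because $\she^t_{\infty|\PP}$ is locally $\D_\PP$‑perfect; and the $\OOt$‑twist isomorphism on $\XP$ applied to $\dopb q\shl$ converts $\rihom(F,\OOt_\XP)\dtens\dopb q\shl$ into $\rihom(F,\rihom(\opb q L,\OOt_\XP))$, the shift in $\Phi^0(\dopb q\shl)$ absorbing that of the projection formula. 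Comparing the two expressions gives the claim.

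For the first isomorphism I would argue in parallel. The key step is to present the source of $\Phi(\ddual(\shl\dtens\ddual\shm))$ as a regular twist of $\shm\detens\she^t_{\infty|\PP}$: using that $\ddual$ commutes with $\detens$ and with $\dopb q$ (as $q$ is smooth), the isomorphism $\ddual_\PP\she^t_{\infty|\PP}\simeq\she^{-t}_{\infty|\PP}$, and the localization $\OO_\PP\dtens\she^t_{\infty|\PP}\simeq\she^t_{\infty|\PP}$, one obtains (up to shift)
\[
\ddual\bigl(\shl\dtens\ddual\shm\bigr)\detens\she^t_{\infty|\PP}\simeq\ddual_\XP\bigl(\dopb q\shl\dtens\ddual_\XP(\shm\detens\she^t_{\infty|\PP})\bigr).
\]
Applying $\rhom[\D_\XP](-,\OOt_\XP)$ — and using, besides the $\OOt$‑twist isomorphism on $\XP$, that $\rhom[\D_\XP](\ddual_\XP\shn,\OOt_\XP)\simeq\OOvt_\XP\ltens[\D_\XP]\shn$ up to shift (as in the proof of Proposition~\ref{pro:Solphi}) together with the usual $\D$‑module manipulations of $\ltens[\D]$ against $\dtens$ — one rewrites the right‑hand side as $\rihom(\opb q\Phi^0(\shl),\Phi(\shm))$, all dimension shifts cancelling.

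I expect the main obstacle to be the $\OOt$‑twist isomorphism itself, with its $\D$‑module equivariance: establishing the compatibility of the ind‑sheaf $\OOt$ with twisting by a regular holonomic module is the one non‑formal ingredient, and the only place where regularity is genuinely used; once it is in hand, the rest is adjunction shuffling, the projection formula for the proper map $q$, and routine dimension‑shift bookkeeping. A secondary delicate point is the commutation of the various tensor products past $\rhom[\opb p\D_\PP](\opb p\she^t_{\infty|\PP},-)$ and $\rihom(F,-)$, which I would control using the $\D_\PP$‑coherence of $\she^t_{\infty|\PP}$ and the properness of $q$.
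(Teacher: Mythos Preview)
The paper does not give a proof of this proposition: it simply states that it and Proposition~\ref{pro:proper} are ``easily deduced from the results in~\cite{KS01}''. Your proposal correctly identifies the single substantive input from~\cite{KS01} --- the $\beta\D_Y$-linear isomorphism $\OOt_Y\dtens\shl\simeq\rihom(\Phi^0(\shl),\OOt_Y)$ for $\shl$ regular holonomic --- and the remaining steps (adjunction, projection formula for the proper map $q$, passage between $\rhom[\D](-,\OOt)$ and $\OOvt\ltens[\D]-$, and the identification $\ddual_\PP\she^t_{\infty|\PP}\simeq\she^{-t}_{\infty|\PP}$ from Proposition~\ref{pro:dualphi}) are the expected formal manipulations. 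Your sketch is therefore in line with, and more explicit than, what the paper provides.
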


Noticing that
\[
\Phi(\OO_X) \simeq \C_X \etens \rihom(\C_{\{t\neq\infty\}},\C_{\{\RE t<?\}}),
\]
one checks easily that $\Psi(\Phi(\OO_X)) \simeq \OO_X$. Hence, Proposition~\ref{pro:tens} shows:

\begin{theorem}
\begin{itemize}
\item [(i)]
For $\shl\in\BDC_\reghol(\D_X)$, we have
\begin{align*}
\Phi(\shl) 
&\simeq \opb q \Phi^0(\shl) \tens \Phi(\OO_X) \\
&\simeq \Phi^0(\shl) \etens \rihom(\C_{\{t\neq\infty\}},\C_{\{\RE t<?\}}).
\end{align*}
\item [(ii)]
The morphism \eqref{eq:IdPsiPhi} is an isomorphism on $\BDC_\reghol(\D_X)$.
\item [(iii)]
For any $\shl,\shl'\in\BDC_\reghol(\D_X)$, the natural morphism
\[
\Hom[\D_X](\shl,\shl') \to \Hom(\Phi(\shl'),\Phi(\shl))
\]
is an isomorphism.
\end{itemize}
\end{theorem}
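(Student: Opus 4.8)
The plan is to deduce all three statements from Proposition~\ref{pro:tens} together with the identity $\Psi(\Phi(\OO_X))\simeq\OO_X$ noted just above the Theorem. For part~(i), I would apply the first isomorphism of Proposition~\ref{pro:tens} with $\shm=\OO_X$: since $\ddual\OO_X\simeq\OO_X$ (by the normalization of the shift in the definition of $\ddual$), one has $\ddual(\shl\dtens\ddual\OO_X)\simeq\ddual(\shl\dtens\OO_X)\simeq\ddual\shl$; but for $\shl$ regular holonomic this is again regular holonomic, and in fact $\Phi$ applied to $\ddual\shl$ recovers $\Phi$ applied to $\shl$ after the standard manipulation relating solutions of $\shl$ and de~Rham of $\ddual\shl$. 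The cleanest route is to observe directly from the definition $\Phi(\shm)=\rhom[\D_\XP](\shm\detens\she^t_{\infty|\PP},\OOt_\XP)$ that $\Phi$ is contravariant and sends $\dtens$ with a regular object to $\rihom(\opb q\Phi^0(-),-)$; taking $\shm=\OO_X$ in the first formula of Proposition~\ref{pro:tens} yields $\Phi(\ddual\shl)\simeq\rihom(\opb q\Phi^0(\shl),\Phi(\OO_X))$. Finally I would use that $\Phi^0(\shl)$ is $\C$-constructible, hence $\rihom(\opb q\Phi^0(\shl),-)\simeq\opb q\Phi^0(\shl)\tens(-)$ after dualizing (the $\C$-constructible dual of $\Phi^0(\shl)$ is $\Phi^0(\ddual\shl)$), and combine with $\Phi(\OO_X)\simeq\C_X\etens\rihom(\C_{\{t\neq\infty\}},\C_{\{\RE t<?\}})$ to get the two displayed expressions. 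The bookkeeping with $\ddual$ and the passage from $\rihom$ to $\tens$ is the one genuinely fiddly point; everything else is formal.

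For part~(ii), I would plug the second formula of Proposition~\ref{pro:tens} into $\Psi\circ\Phi$. Writing $\shl\simeq\ddual(\shl'\dtens\ddual\OO_X)$ is not needed; rather, starting from part~(i), $\Phi(\shl)\simeq\Phi(\OO_X)\tens\opb q\Phi^0(\shl)$ (after replacing $\shl$ by $\ddual\shl$ as necessary, which is harmless since $\ddual$ is an involution on $\BDC_\reghol$), so Proposition~\ref{pro:tens} gives $\Psi(\Phi(\shl))\simeq\Psi(\Phi(\OO_X)\tens\opb q\Phi^0(\shl))\simeq\Psi(\Phi(\OO_X))\dtens\shl\simeq\OO_X\dtens\shl\simeq\shl$. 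I would then check that this chain of isomorphisms is compatible with the natural morphism $\id\to\Psi\circ\Phi$, i.e.\ that the composite equals the canonical map; this follows because each isomorphism used is the canonical one and the case $\shl=\OO_X$ of the unit is precisely $\OO_X\isoto\Psi(\Phi(\OO_X))$.

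For part~(iii), I would use the adjunction/duality $\Hom[\D_X](\shl,\shl')\simeq\Hom[\D_X](\OO_X,\shl'\dtens\ddual\shl)$ together with the fact that for regular holonomic modules $\Hom[\D_X](\OO_X,\shn)\simeq H^0\rsect(X;\Phi^0(\ddual\shn))$-type computations reduce everything to the constructible side, where $\Phi^0$ is an equivalence by the classical Riemann--Hilbert correspondence of~\cite{Kas84}. Concretely: $\Hom(\Phi(\shl'),\Phi(\shl))$ computes, via part~(i), $\Hom$ between $\Phi^0(\shl')\etens(\cdots)$ and $\Phi^0(\shl)\etens(\cdots)$ in $\BDC(\mathrm{I}\C_\XP)$; since the $\PP$-factor $\rihom(\C_{\{t\neq\infty\}},\C_{\{\RE t<?\}})$ is fixed and $\RHom$ of it with itself over $\PP$ is concentrated in degree zero and one-dimensional (this is the one computation I would actually have to carry out, or cite from the analysis underlying Lemma~\ref{lem:s/t}), a Künneth argument collapses this to $\Hom[\C_X](\Phi^0(\shl'),\Phi^0(\shl))$, which equals $\Hom[\D_X](\shl,\shl')$ by~\cite{Kas84}. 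Tracking that the resulting isomorphism is the natural one is routine once the ingredients are in place. The main obstacle is the $\PP$-direction computation showing that $\RHom$ of the exponential ind-sheaf $\rihom(\C_{\{t\neq\infty\}},\C_{\{\RE t<?\}})$ with itself is just $\C$ in degree $0$: this is what makes the $\etens$ factor "transparent" to $\Hom$ and is the technical heart of~(iii); I expect it to follow from the description of tempered solutions of $\she^t_{\infty|\PP}$ and a direct estimate, parallel to Lemma~\ref{lem:s/t}.
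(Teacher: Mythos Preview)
Your proposal is correct and follows exactly the route the paper indicates: the paper's entire argument is the sentence ``Hence, Proposition~\ref{pro:tens} shows:'' after noting $\Psi(\Phi(\OO_X))\simeq\OO_X$, and you have spelled out precisely how that deduction goes. Your derivations of (i) and (ii) --- specialize Proposition~\ref{pro:tens} to $\shm=\OO_X$, handle the $\ddual$/constructible-duality bookkeeping to pass from $\rihom$ to $\tens$, then feed (i) back into the second formula of Proposition~\ref{pro:tens} --- are the intended ones; the ``fiddly point'' you flag (identifying $\rihom(\opb q\Phi^0(\ddual\shl),-)$ with $\opb q\Phi^0(\shl)\tens(-)$ via constructible duality and $\Phi^0\circ\ddual\simeq D'_X\circ\Phi^0$) is genuine but standard. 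For (iii) the paper gives no hint beyond Proposition~\ref{pro:tens}; your K\"unneth reduction to $\Hom(\Phi^0(\shl'),\Phi^0(\shl))$ together with classical Riemann--Hilbert is the natural reading, and the one honest computation you isolate --- that $\RHom$ of $\rihom(\C_{\{t\neq\infty\}},\C_{\{\RE t<?\}})$ with itself over $\PP$ is $\C$ in degree~$0$ --- is indeed what makes the argument work and is not supplied by the paper either.
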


Therefore, Conjecture~\ref{con:main} holds true for regular holonomic $\D$-modules.

\section{Review on good formal structures}

Let $D\subset X$ be a hypersurface. A flat meromorphic connection with poles at $D$ is a holonomic $\D_X$-module $\shm$ such that $\ssv(\shm) = D$ and $\shm \simeq \shm(*D)$.

We recall here the classical results on the formal structure of flat meromorphic connections on curves. (Analogous results in higher dimension have been obtained in~\cite{Moc11,Ked11}.)

\smallskip

Let $X$ be an open disc in $\C$ centered at $0$.

For $\shf$ an $\OO_X$-module, we set
\[
\shf{\widehat |}_0 = \widehat\OO_{X,0} \tens[\OO_{X,0}]\shf_0, 
\]
where $\widehat\OO_{X,0}$ is the completion of $\OO_{X,0}$.

One says that a flat meromorphic connection $\shm$ with poles at $0$ has a good formal structure if
\begin{equation}
\label{eq:fM}
\shm{\widehat |}_0 \simeq 
\DSum_{i\in I}\left(\shl_i\dtens\she^{\varphi_i}_{0|X}\right){\widehat |}_0
\end{equation}
as $(\widehat\OO_{X,0} \tens[\OO_{X,0}]\D_{X,0})$-modules, where $I$ is a finite set, $\shl_i$ are regular holonomic $\D_X$-modules, and $\varphi_i\in\OO_X(*0)$.

A ramification at $0$ is a map $X\to X$ of the form $x\mapsto x^m$ for some $m\in\N$.

The Levelt-Turrittin theorem asserts:

\begin{theorem}
\label{thm:LT}
Let $\shm$ be a meromorphic connection with poles at $0$.
Then there is a ramification $f\colon X\to X$ such that $\dopb f\shm$ 
has a good formal structure at $0$.
\end{theorem}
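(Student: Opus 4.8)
This is the classical Levelt--Turrittin theorem; we only recall the line of argument, a full proof being available in the literature on formal differential equations (Levelt, Turrittin, Malgrange, Sabbah).

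Having a good formal structure at $0$ depends only on $\shm{\widehat|}_0$, so we may pass to the associated formal connection: set $\widehat K = \C(\!(x)\!)$, a copy of the fraction field of $\widehat\OO_{X,0}(*0)$, and let $\widehat M$ be the finite-dimensional $\widehat K$-vector space $\widehat K\tens[\OO_{X,0}]\shm_0$ with its induced action of $x\partial_x$. A ramification $x\mapsto x^m$ corresponds, on formal connections, to scalar extension along the finite field extension $\widehat K\hookrightarrow\C(\!(x^{1/m})\!)$, and a composition of ramifications is again a ramification; so it suffices to produce $m\in\N$ such that, after this extension, $\widehat M$ becomes a finite direct sum of modules $R_i\otimes E^{\varphi_i}$, with $R_i$ a regular singular formal connection, $E^{\varphi}$ the rank-one connection $d+d\varphi$, and $\varphi_i$ meromorphic with a pole at $0$.

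The plan is an induction on $\dim_{\widehat K}\widehat M$. First comes the \emph{slope decomposition}: choosing a cyclic vector one attaches to $\widehat M$ a scalar operator $P$, and the slopes $s_1>\dots>s_r\ge 0$ of its Newton polygon provide a canonical splitting $\widehat M\simeq\bigoplus_j\widehat M_{s_j}$ with $\widehat M_{s_j}$ of pure slope $s_j$ (equivalently: the Tannakian category of formal connections is graded by slopes). If $r\ge 2$, each summand has smaller rank and we conclude by induction; the summand $\widehat M_0$ of slope zero is regular singular, hence already of the required shape with $\varphi=0$. It remains to treat a $\widehat M$ of a single slope $s>0$. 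After the ramification $x\mapsto x^m$ with $m$ a denominator of $s$ we may assume $s\in\N$; reading off the distinct roots $c_1,\dots,c_k\in\C^\times$ of the principal symbol of $P$ along the relevant edge and setting $\varphi_i=c_i x^{-s}$, the Hukuhara--Turrittin argument yields a decomposition $\widehat M\simeq\bigoplus_i E^{\varphi_i}\otimes N_i$ in which each $N_i$ has slope $<s$ (twisting by $E^{-\varphi_i}$ lowers the dominant term). If $k\ge 2$ the $N_i$ have smaller rank and we finish by induction; if $k=1$ we instead recurse on $N_1$, whose slope is a non-negative rational with denominator dividing $m$ and strictly smaller than $s$, so the process terminates after finitely many steps at the regular case.

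The substance of the argument — and the step I would treat as a black box — is the combination of the slope decomposition with the Hukuhara--Turrittin splitting inside a fixed slope: one must check that after subtracting the dominant exponential part the leading symbol genuinely drops, and that the isotypic pieces (on which the residue may act with a nilpotent part) split off as honest direct summands rather than mere subquotients. This, together with the bookkeeping of the successive ramifications, is entirely classical.
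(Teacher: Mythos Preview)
Your sketch is a correct outline of the classical Levelt--Turrittin argument (slope decomposition via the Newton polygon, ramification to make the slope integral, then twisting by the dominant exponential to lower the slope and recurse). There is nothing to compare against: the paper does not prove this theorem at all --- it is stated in the review section as the Levelt--Turrittin theorem and taken for granted as a classical result. One cosmetic quibble: $\widehat\OO_{X,0}(*0)$ is already the field $\C(\!(x)\!)$, so ``fraction field of'' is superfluous.
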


Assume that $\shm$ satisfies \eqref{eq:fM}. If $\shm$ is regular, then $\varphi_i\in\OO_X$ for all $i\in I$, and \eqref{eq:fM} is induced by an isomorphism
\[
\shm_0 \simeq 
\DSum_{i\in I}\left(\shl_i\dtens\she^{\varphi_i}_{0|X}\right)_0.
\]
However, such an isomorphism does not hold in general.

Consider the real oriented blow-up
\begin{equation}
\label{eq:blow}
\pi\colon B = \R\times S^1 \to X,
\quad (\rho,\theta) \mapsto \rho e^{i\theta}.
\end{equation}
Set $V = \{\rho>0\}$ and let $Y = \{\rho \geq 0\}$ be its closure. 
If $W$ is an open neighborhood of $(0,\theta)\in\partial Y$, then $\pi(W\cap V)$ contains a germ of open sector around the direction $\theta$ centered at $0$.

Consider the commutative ring
\[
\sha_Y = \rhom[\opb\pi\D_{\overline X}](\opb\pi\OO_{\overline X}, \rhom(\C_V,\Dbt_B)),
\]
where $\overline X$ is the complex conjugate of $X$.

To a $\D_X$-module $\shm$, one associates the $\sha_Y$-module
\[
\pi^*\shm = \sha_Y \tens[\opb\pi\OO_X]\opb\pi\shm.
\]

The Hukuara-Turrittin theorem states that \eqref{eq:fM} can be extended to germs of open sectors:

\begin{theorem}
\label{thm:blow}
Let $\shm$ be a flat meromorphic connection with poles at $0$.
Assume that $\shm$ admits the good formal structure \eqref{eq:fM}.
Then for any $(0,\theta)\in \partial Y$ one has
\begin{equation}
\label{eq:atheta}
(\pi^*\shm)_{(0,\theta)} \simeq
\bigl(\DSum_{i\in
I}\pi^*\bigl(\she^{\varphi_i}_{0|X}\bigr)^{m_i}\bigr)_{(0,\theta)},
\end{equation}
where $m_i$ is the rank of $\shl_i$.
\end{theorem}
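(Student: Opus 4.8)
The plan is to recognize \eqref{eq:atheta} as the classical Hukuhara-Turrittin asymptotic existence theorem, and to run its standard sheaf-theoretic proof (in the form due to Malgrange and Sibuya), taking the formal decomposition \eqref{eq:fM} as the only input. First I would reduce the right-hand side of \eqref{eq:atheta} to the pull-back of the formal model $\shm^{\mathrm{mod}} = \DSum_{i\in I}\shl_i\dtens\she^{\varphi_i}_{0|X}$ (where one may assume $\shl_i=\shl_i(*0)$ is a regular meromorphic connection of rank $m_i$). Since $1/x$ has moderate growth along $\partial Y$, one has $\pi^*\OO_X\simeq\pi^*\bigl(\OO_X(*0)\bigr)$, so $\pi^*$ is compatible with $\dtens$ and with the localization at $0$. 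Over a germ of sector a regular meromorphic connection $\shl_i$ of rank $m_i$ admits a single-valued horizontal frame, and regularity means precisely that this frame and its inverse are of moderate growth at $0$; hence $(\pi^*\shl_i)_{(0,\theta)}\simeq\bigl((\pi^*\OO_X)^{m_i}\bigr)_{(0,\theta)}$ with connection $d$. Tensoring by $\pi^*\she^{\varphi_i}_{0|X}$ and summing over $i$ identifies the right-hand side of \eqref{eq:atheta} with $(\pi^*\shm^{\mathrm{mod}})_{(0,\theta)}$, so it remains to show that the formal isomorphism \eqref{eq:fM}, $\shm{\widehat|}_0\simeq\shm^{\mathrm{mod}}{\widehat|}_0$, can be realized over every germ of sector, i.e. $(\pi^*\shm)_{(0,\theta)}\simeq(\pi^*\shm^{\mathrm{mod}})_{(0,\theta)}$.

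To establish this I would fix $\theta$ and a small sector $S$ around that direction, and proceed in two steps. First, by the Borel-Ritt lemma with moderate-growth bounds, choose a holomorphic bundle map $P$ on $S$, of moderate growth and asymptotic to the formal isomorphism; then $P$ conjugates $\shm^{\mathrm{mod}}$ into a connection that differs from $\shm$ only by a flat (rapidly decaying) term $R$. It then remains to correct $P$ into an honest horizontal isomorphism $P\,(\id+Q)$ with $Q$ flat: writing out flatness, $Q$ must solve $\nabla^{\mathrm{ad}}Q = -R(\id+Q)$, where $\nabla^{\mathrm{ad}}$ is the connection on $\mathcal{E}nd(\shm^{\mathrm{mod}})$. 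Now $\mathcal{E}nd(\shm^{\mathrm{mod}})\simeq\DSum_{i,j}\bigl(\text{regular}\bigr)\dtens\she^{\varphi_i-\varphi_j}_{0|X}$, and — merging summands with $\varphi_i-\varphi_j\in\OO_X$ if necessary — we may assume $\varphi_i-\varphi_j$ has an effective pole whenever $i\neq j$. One then inverts $\nabla^{\mathrm{ad}}$ on flat data block by block: by integration along rays from $0$ on the regular (diagonal) blocks, and, on the off-diagonal ones, using the vanishing of $H^1$ over $S$ for the sheaf of rapidly decaying horizontal sections of $\she^{\varphi_i-\varphi_j}_{0|X}$ (a direct computation, since such a sheaf is a rank-one constant sheaf extended by zero from an open arc of directions of fixed length, and $S$ is taken smaller than that arc). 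Because $R$ has sup-norm tending to $0$ as the radius of $S$ shrinks while the norm of this Green's operator stays bounded, the map $Q\mapsto-(\nabla^{\mathrm{ad}})^{-1}\bigl(R(\id+Q)\bigr)$ is a contraction on $S$ small enough, and its fixed point is the desired flat $Q$; this yields \eqref{eq:atheta}.

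The hard part is this last step: upgrading the \emph{formal}, hence merely \emph{asymptotic}, isomorphism to a \emph{genuine horizontal} one on arbitrarily small sectors, uniformly in $\theta$ — in particular when $\theta$ is a Stokes direction, so that no block of $\mathcal{E}nd(\shm^{\mathrm{mod}})$ is everywhere dominant on $S$. This is exactly where the analysis is concentrated, through the cohomology vanishing for the rapid-decay solution sheaves of the exponential models over small arcs, equivalently through the local triviality of the Malgrange-Sibuya class; everything else is formal bookkeeping with \eqref{eq:fM}. Alternatively, one may simply invoke the Hukuhara-Turrittin theorem from the classical theory of meromorphic connections on curves.
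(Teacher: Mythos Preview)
The paper does not prove this statement: it is stated as the classical Hukuhara--Turrittin theorem, recalled without proof in the review section on good formal structures. The only remark the paper adds is the parenthetical observation that $x^\lambda(\log x)^m\in\sha_Y$, which explains why only the ranks $m_i$ of the regular factors $\shl_i$ appear; this is precisely your first reduction step.

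Your proposal is a correct outline of the standard Malgrange--Sibuya proof of the asymptotic existence theorem (Borel--Ritt lift, then correction by a flat term via a contraction argument block by block on $\mathcal{E}nd(\shm^{\mathrm{mod}})$), and you yourself note at the end that one may instead simply invoke the classical result. Since the paper takes exactly that last option, there is no discrepancy to discuss: your argument supplies a proof where the paper gives none.
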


(Note that only the ranks of the $\shl_i$'s appear here, since $x^\lambda(\log x)^m$ belongs to $\sha_Y$ for any $\lambda\in\C$ and $m\in\Z_{\geq 0}$.)

One should be careful that the above isomorphism depends on $\theta$, giving rise to the Stokes phenomenon.

We will need the following result:

\begin{lemma}
\label{lem:pipi}
If $\shm$ is a flat meromorphic connection with poles at $0$, then
\[
\roim\pi(\pi^*\shm) \simeq \shm.
\]
\end{lemma}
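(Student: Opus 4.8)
The plan is to realize the morphism in the statement as the composition
\[
\shm \longrightarrow \roim\pi\opb\pi\shm \longrightarrow \roim\pi\bigl(\sha_Y\tens[\opb\pi\OO_X]\opb\pi\shm\bigr) = \roim\pi(\pi^*\shm),
\]
where the first arrow is the adjunction morphism and the second is induced by the canonical ring morphism $\opb\pi\OO_X\to\sha_Y$; this composition is $\D_X$-linear, $\sha_Y$ being a $\opb\pi\D_X$-module. Since $\pi$ restricts to an isomorphism $V\isoto U=X\setminus\{0\}$ and $\sha_Y|_V\simeq\OO_V$, the morphism is an isomorphism over $U$, and as the cone of the morphism is supported over $0$, it remains to analyze the stalk at $0$, that is, cohomology over a neighborhood of the circle $\partial Y$.

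The crucial step is the identification $\roim\pi\,\sha_Y\simeq\OO_X(*0)$, compatibly with the ring structures and the $\D_X$-action. For $H^0$: a section of $\sha_Y$ over a neighborhood of $\partial Y$ is a holomorphic function on a punctured disc which has moderate growth in a sector about each direction $\theta\in S^1$; by compactness of $S^1$ finitely many such sectors cover it, the growth estimate becomes uniform, and the function is meromorphic, so the stalk at $0$ of $H^0\roim\pi\,\sha_Y$ is $\OO_X(*0)_0$. The remaining, and main, point is the vanishing $H^k\roim\pi\,\sha_Y=0$ for $k\geq1$. I would deduce it by resolving $\sha_Y$ by its tempered Dolbeault complex --- in one complex variable the $\overline\partial$-Poincar\'e lemma for tempered distributions shows that $\rhom(\C_V,\Dbt_B)$, placed in the Dolbeault complex, resolves $\sha_Y$ in degree $0$ --- and noting that each term, being a module over the soft ring of smooth functions on $B$, is $\roim\pi$-acyclic. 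Equivalently one can cover $\partial Y$ by two closed arcs and run a Mayer--Vietoris argument, the input being the surjectivity of the restriction map of tempered holomorphic functions between overlapping sectors; this sectorial extension property is the technical heart of the proof.

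Granting $\roim\pi\,\sha_Y\simeq\OO_X(*0)$, a projection formula finishes the argument. A flat meromorphic connection with poles at $0$ on a curve is $\OO_X(*0)$-locally free of finite rank; since $\shm=\shm(*0)$ and $\sha_Y$ is an $\opb\pi\OO_X(*0)$-algebra, we have $\pi^*\shm\simeq\sha_Y\tens[\opb\pi\OO_X(*0)]\opb\pi\shm$, and choosing locally an $\OO_X(*0)$-basis of $\shm$, together with the fact that finite direct sums commute with $\roim\pi$, yields
\[
\roim\pi(\pi^*\shm)\;\simeq\;(\roim\pi\,\sha_Y)\tens[\OO_X(*0)]\shm\;\simeq\;\OO_X(*0)\tens[\OO_X(*0)]\shm\;\simeq\;\shm.
\]
One checks that these local identifications glue and are compatible with the $\D_X$-module structures, so that they recover the morphism of the statement, which is thus an isomorphism. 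The main obstacle throughout is the cohomological vanishing $H^k\roim\pi\,\sha_Y=0$ for $k\geq1$, i.e.\ the acyclicity of tempered holomorphic functions for the blow-up projection.
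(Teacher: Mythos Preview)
The paper states this lemma without proof; it is quoted as a classical fact from the theory of asymptotic analysis on the real blow-up, so there is no argument in the paper against which to compare yours. Your strategy is the standard one and is correct. The reduction to $\roim\pi\sha_Y\simeq\OO_X(*0)$ via the $\OO_X(*0)$-local freeness of a flat meromorphic connection is exactly right, and your identification of $H^0$ is the moderate-growth form of the removable-singularity theorem. The only point requiring genuine analysis is the vanishing $R^1\pi_*\sha_Y=0$. Your Dolbeault argument works once one notes that $\sha_Y$ is an honest sheaf (the paper writes $\rhom$, not $\rihom$) admitting a two-term resolution by $C^\infty_B$-modules of tempered distributions, hence by soft sheaves; thus $\roim\pi$ is computed termwise and the question becomes surjectivity of $\overline\partial$ on distributions on a punctured disc with moderate growth at the origin, which is classical. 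The Mayer--Vietoris alternative you sketch is essentially the Malgrange--Sibuya presentation of the same fact. So there is no gap: the acyclicity you correctly flag as ``the technical heart'' is a nontrivial analytic input, but one that is well established and is precisely why the paper feels entitled to omit the proof.
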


\section{Reconstruction theorem on curves}
\label{se:reconstruction}

Let $X$ be a complex curve.
Then Conjecture~\ref{con:main}~(i) holds true:

\begin{theorem}
\label{thm:main}
For $\shm\in\BDC_\hol(\D_X)$ there is a functorial isomorphism
\begin{equation}
\label{eq:main}
\shm \isoto \Psi(\Phi(\shm)).
\end{equation}
\end{theorem}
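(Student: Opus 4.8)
The plan is to reduce to the local model computed in Section 2, using the two functoriality statements (Proposition~\ref{pro:proper} and Proposition~\ref{pro:tens}) together with the classical structure theory recalled in Section 5. The first step is a dévissage. Since $X$ is a curve, any $\shm\in\BDC_\hol(\D_X)$ sits, up to shift, in distinguished triangles built from meromorphic connections $\shm'(*D)$ and from modules supported on points; the latter are regular, so the statement for them follows from the regular case already established in the Theorem of Section~4. By the same triangles it therefore suffices to prove \eqref{eq:main} when $\shm$ is a flat meromorphic connection with poles on a discrete set $D$, and then, working locally, when $D=\{0\}$ and $X$ is a small disc.

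The second step removes the ramification. By the Levelt--Turrittin theorem (Theorem~\ref{thm:LT}) there is a ramification $f\colon X\to X$, $x\mapsto x^m$, such that $\dopb f\shm$ has a good formal structure. One passes from $\dopb f\shm$ back to $\shm$ by taking a direct summand of $\doim f\dopb f\shm$, and $f$ is proper, so Proposition~\ref{pro:proper} shows that $\Psi\circ\Phi$ commutes with $\doim f$ up to the shift $[d_X-d_Y]=0$. Hence it is enough to treat a connection $\shm$ admitting the good formal structure \eqref{eq:fM}.

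The third step is the heart of the argument: prove \eqref{eq:main} for $\shm=\shl\dtens\she^{\varphi}_{0|X}$ with $\shl$ regular holonomic and $\varphi\in\OO_X(*0)$. Writing this module as $\ddual(\ddual\shl\dtens\ddual\she^{\varphi}_{0|X})$ and using $\ddual\she^{\varphi}_{0|X}\simeq\she^{-\varphi}_{0|X}$ (Proposition~\ref{pro:dualphi}, which applies since in the good-formal-structure setting one may assume each $\varphi_i$ has an effective pole), the second identity of Proposition~\ref{pro:tens} reduces the claim to the case $\shl=\OO_X$, i.e.\ to $\shm=\she^{\varphi}_{0|X}$. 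For this we combine Proposition~\ref{pro:Ephi}, which identifies $\she^{\varphi}_{D|X}$ with $\roim q\rhom[\opb p\D_\PP](\opb p\she^t_{\infty|\PP},\rhom(\C_{\{\RE(t+\varphi)<?\}},\OOt_\XP))$, with the computation of $\Phi(\she^{\varphi}_{0|X})$ coming from Proposition~\ref{pro:Solphi} and Lemma~\ref{lem:s/t}: one has to check that the ind-sheaf $F=\Phi(\she^{\varphi}_{0|X})$ is precisely $\rihom(\C_U,\C_{\{\RE(t+\varphi)<?\}})$ on $\XP$ (the extra $t$ entering because of the external twist by $\she^t_{\infty|\PP}$), and then that plugging this $F$ into the definition of $\Psi$ returns exactly the right-hand side of Proposition~\ref{pro:Ephi}. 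That last verification is a formal manipulation of $\rhom$ and $\opb p$, $\roim q$, once the two ind-$\R$-constructible sheaves are matched.

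The fourth step patches the local models into the good formal structure. Here the subtlety is precisely the Stokes phenomenon: the decomposition \eqref{eq:fM} holds only after formal completion, not on $X$ itself, so one cannot simply say $\shm\simeq\bigoplus_i\shl_i\dtens\she^{\varphi_i}_{0|X}$ and conclude summand by summand. The device is the real oriented blow-up $\pi\colon B\to X$ of Section~5. By the Hukuhara--Turrittin theorem (Theorem~\ref{thm:blow}) the sheaf $\pi^*\shm$ is \emph{locally} on $\partial Y$ a direct sum of the $\pi^*\she^{\varphi_i}_{0|X}$; both $\Phi$ and $\Psi$, being built from $\rhom$ into tempered objects, are local on $\XP$ and hence compatible with this blow-up picture, and by Lemma~\ref{lem:pipi} one recovers $\shm$ as $\roim\pi(\pi^*\shm)$. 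So one proves the isomorphism $\pi^*\shm\isoto (\Psi\circ\Phi)$-analogue on $B$ by gluing the sectorial local isomorphisms (which exist by Step~3, the Stokes data being encoded in how $\{\RE(t+\varphi_i)<?\}$ interlace as $\theta$ varies), and then applies $\roim\pi$. I expect this gluing—checking that the sectorial isomorphisms of Step~3 are compatible on overlaps and descend through $\roim\pi$—to be the main obstacle, since it is exactly the point where the topology of the exponents $\varphi_i$ (their dominance order, the Stokes lines) must be shown to be faithfully reflected by the stratification of the ind-sheaf $\C_{\{\RE(t+\varphi)<?\}}$ on $\XP$. Finally, functoriality of \eqref{eq:main} in $\shm$ follows from the functoriality of all the morphisms used, the natural transformation $\id\to\Psi\circ\Phi$ of \eqref{eq:IdPsiPhi} being the one whose invertibility we have just checked on each piece of the dévissage.
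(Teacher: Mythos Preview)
Your outline follows the paper's proof closely: d\'evissage to a meromorphic connection, Levelt--Turrittin plus Proposition~\ref{pro:proper} to reduce to good formal structure, passage to the real blow-up via Lemma~\ref{lem:pipi} and an analogue of Proposition~\ref{pro:proper} for $\pi$, Hukuhara--Turrittin to reduce to $\she^{\varphi}_{0|X}$, and then Propositions~\ref{pro:Solphi} and~\ref{pro:Ephi} for the endpoint computation. Two remarks are in order.

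First, in Step~3 your ``formal manipulation'' hides a genuine step. Proposition~\ref{pro:Solphi} gives $\Phi(\she^{\varphi}_{0|X})\simeq\rihom(\C_{U'},\C_{\{\RE(t+\varphi)<?\}})$ with $U'=(\XP)\setminus D'$, whereas the input to Proposition~\ref{pro:Ephi} is the \emph{bare} ind-sheaf $\C_{\{\RE(t+\varphi)<?\}}$. These differ by a piece supported on $D'$, and one must check that $\Psi$ kills it: the paper observes that $\Phi(\she^{\varphi}_{0|X})\tens\C_{D'}$ is actually $\C$-constructible and that $\Psi$ vanishes on it. This is not difficult, but it is not a tautology.

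Second, your Step~4 over-complicates the patching. Since the morphism \eqref{eq:IdPsiPhi} is a \emph{global} natural transformation, checking that it is an isomorphism is a local question; after passing to $\pi^*\shm$ one checks it stalkwise on $\partial Y$, where Theorem~\ref{thm:blow} gives a direct-sum decomposition. There is no gluing of sectorial isomorphisms to perform and no compatibility on overlaps to verify---the Stokes data are irrelevant to this step. (For the same reason your detour through Proposition~\ref{pro:tens} to strip off $\shl_i$ is unnecessary: on the blow-up the regular factors are already absorbed into $\sha_Y$, which is why only the ranks $m_i$ appear in \eqref{eq:atheta}.)
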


\begin{proof}[Sketch of proof]
Since the statement is local, we can assume that $X$ is an open disc in $\C$ centered at $0$, and that $\ssv(\shm) = \{0\}$.

By devissage, we can assume from the beginning that $\shm$ is a flat meromorphic connection with poles at $0$.

Let $f\colon X\to X$ be a ramification as in Theorem~\ref{thm:LT},
so that $\dopb f\shm$ admits a good formal structure at $0$.

Note that $\doim f \dopb f \shm \simeq \shm \dsum \shn$ for some $\shn$.
If \eqref{eq:main} holds for $\dopb f\shm$, then it holds for $\shm\dsum\shn$ by Proposition~\ref{pro:proper}, and hence it also holds for $\shm$. 

We can thus assume that $\shm$ admits a good formal structure at $0$.

Consider the real oriented blow-up \eqref{eq:blow}. 

By Lemma~\ref{lem:pipi}, one has $\shm\simeq\roim\pi\pi^*\shm$.
Hence Proposition~\ref{pro:proper} (or better, its analogue for $\pi$) 
implies that we can replace $\shm$ with $\pi^*\shm$.

By Theorem~\ref{thm:blow}, we finally reduce to prove
\[
\she_{0|X}^\varphi \isoto 
\Psi(\Phi(\she_{0|X}^\varphi)).
\]

Set $D' = \{x=0\}\cup\{t=\infty\}$ and $U'=(\XP)\setminus D'$.
By Proposition~\ref{pro:dualphi},
\[
\ddual \she_{D'|\XP}^{t+\varphi} \simeq
\ddual(\she_{0|X}^\varphi\detens\she_{\infty|\PP}^t)
\simeq \she_{D'|\XP}^{-t-\varphi}.
\]

By Proposition~\ref{pro:Solphi}, we thus have
\[
\Phi(\she_{0|X}^\varphi) 
\simeq \rihom(\C_{U'},\C_{\{\RE(t+\varphi)<?\}}).
\]

Noticing that $\Phi(\she_{0|X}^\varphi)\tens\C_{D'} \in \BDC_\Cc(\C_\XP)$, one checks that $\Psi(\Phi(\she_{0|X}^\varphi)\tens\C_{D'}) \simeq 0$.

Hence, Proposition~\ref{pro:Ephi} implies
\[
\Psi(\Phi(\she_{0|X}^\varphi) )
\simeq \Psi(\C_{\{\RE(t+\varphi)<?\}}) 
\simeq \she_{0|\C}^\varphi.
\]
\end{proof}

\begin{example}
Let $X = \C$, $\varphi(x) = 1/x$ and $\shm = \she^\varphi_{0|X}$.
Then we have
\[
H^k\Phi(\shm) = 
\begin{cases}
\C_{\{\RE(t+\varphi) <?\}}, & \text{for }k=0, \\
\C_{\{x= 0,\ t\neq\infty\}} \dsum \C_{\{x\neq 0,\ t=\infty\}}, & \text{for }k=1, \\
0, & \text{otherwise}.
\end{cases}
\]
\end{example}

\section{Stokes phenomenon}\label{se:Stokes}

We discuss here an example which shows how, in our setting, the Stokes phenomenon arises in a purely topological fashion.

\medskip

Let $X$ be an open disc in $\C$ centered at $0$.
(We will shrink $X$ if necessary.) 
Set $U = X\setminus \{0\}$.

Let $\shm$ be a flat meromorphic connection with poles at $0$ such that
\[
\shm{\widehat |}_0 \simeq 
(\she^{\varphi}_{0|X} \dsum \she^{\psi}_{0|X}){\widehat |}_0,
\quad \varphi,\psi\in\OO_X(*0).
\]
Assume that $\psi-\varphi$ has an effective pole at $0$.

The Stokes curves of $\she^{\varphi}_{0|X} \dsum \she^{\psi}_{0|X}$ are the real analytic arcs $\ell_i$, $i\in I$, defined by
\[
\{\RE(\psi-\phi) = 0\} = \bigsqcup\nolimits_{i\in I}\ell_i.
\]
(Here we possibly shrink $X$ to avoid crossings of the $\ell_i$'s and to ensure that they admit the polar coordinate $\rho>0$ as parameter.)

Since $\she^{\varphi}_{0|X} \simeq \she^{\varphi+\varphi_0}_{0|X}$ for $\varphi_0\in\OO_X$, the Stokes curves are not invariant by isomorphism.

The Stokes lines $L_i$, defined as the limit tangent half-lines to $\ell_i$ at $0$, are invariant by isomorphism.

The Stokes matrices of $\shm$ describe how the isomorphism \eqref{eq:atheta} changes when $\theta$ crosses a Stokes line.

Let us show how these data are topologically encoded in $\Phi(\shm)$.

Set $D' = \{x=0\}\cup\{t=\infty\}$ and $U'=(\XP)\setminus D'$.
Set
\begin{align*}
F_c &= \C_{\{\RE(t+\varphi)<c\}}, &
G_c &= \C_{\{\RE(t+\psi)<c\}}, \\
F &= \C_{\{\RE(t+\varphi)<?\}} , &
G &= \C_{\{\RE(t+\psi)<?\}} .
\end{align*}
By Proposition~\ref{pro:Solphi} and Theorem~\ref{thm:blow},
\[
\Phi(\shm) \simeq \rihom(\C_{U'},H),
\]
where $H$ is an ind-sheaf such that
\[
H\tens\C_{\opb q S} \simeq (F\dsum G)\tens\C_{\opb q S}
\] 
for any sufficiently small open sector $S$.

Let $\fb^\pm$ be the vector space of upper/lower triangular matrices in $\operatorname{M}_2(\C)$, and let $\ft=\fb^+\cap \fb^-$ be the vector space of diagonal matrices.

\begin{lemma}\label{lem:FcGc}
Let $S$ be an open sector, and $\mathfrak{v}$ a vector space, which satisfy one of the following conditions:
\begin{itemize}
\item [(i)]
$\mathfrak{v} = \fb^\pm$ and $S\subset \{\pm\RE(\psi-\varphi)>0\}$,
\item [(ii)]
$\mathfrak{v}=\ft$, $S\supset L_i$ for some $i\in I$ and $S\cap L_j=\varnothing$ for $i\neq j$.
\end{itemize}
Then, for $c'\gg c$, one has
\[
\Hom((F_c\dsum G_c)|_{\opb q S}, (F_{c'}\dsum G_{c'})|_{\opb q S}) \simeq
\mathfrak{v}.
\]
In particular,
\[
\Endo((F\dsum G)\tens\C_{\opb q S}) \simeq \mathfrak{v}.
\]
\end{lemma}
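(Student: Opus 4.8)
The plan is to reduce the statement to a computation of elementary $\Hom$-spaces between constant sheaves on half-planes. Restricted to $\opb qS$, the sheaves $F_c$ and $G_c$ become $\C_{A_c}$ and $\C_{B_c}$, the zero-extensions of the constant sheaves on the open subsets $A_c=\{x\in S,\ \RE(t+\varphi(x))<c\}$ and $B_c=\{x\in S,\ \RE(t+\psi(x))<c\}$ of $\opb qS$. Each of $A_c$, $B_c$ is fibered over the sector $S$ with half-plane fibers, hence --- by a fiberwise straight-line retraction onto an interior section --- is contractible, in particular connected. I would first record the general fact that, for open subsets $A,B$ of $\opb qS$, $\Hom(\C_A,\C_B)\simeq\Gamma(A;\C_{A\cap B})$, the space of sections over $A$ of the zero-extension to $A$ of the constant sheaf on $A\cap B$: it equals $\C$ when $A\subset B$, generated by the natural inclusion, and it vanishes when $\varnothing\neq A\cap B\subsetneq A$, since the support of such a section is clopen in the connected $A$ and contained in the proper subset $A\cap B$. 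Here $A\cap B$ is never empty, its fibers being intersections of half-planes of a common direction.

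A morphism $(F_c\dsum G_c)|_{\opb qS}\to(F_{c'}\dsum G_{c'})|_{\opb qS}$ is thus a $2\times2$ matrix with entries these four $\Hom$-spaces. Computing fibers, $A_c\subset A_{c'}$ and $B_c\subset B_{c'}$ hold for $c'\geq c$, so the diagonal entries are always $\simeq\C$; and $A_c\subset B_{c'}$ holds iff $\RE(\psi-\varphi)\leq c'-c$ on $S$, while $B_c\subset A_{c'}$ holds iff $\RE(\psi-\varphi)\geq c-c'$ on $S$. The next step --- and the place where the hypothesis on $\varphi,\psi$ enters --- is to decide these two inequalities. Since $\psi-\varphi$ has an effective pole at $0$, after the preliminary shrinking of $X$ the function $\RE(\psi-\varphi)$ is unbounded near $0$ on $S$: it tends to $+\infty$ as $x\to0$ inside a component of $S\setminus\bigsqcup\nolimits_i\ell_i$ lying in $\{\RE(\psi-\varphi)>0\}$, and to $-\infty$ inside one lying in $\{\RE(\psi-\varphi)<0\}$. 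In case (i) with sign $+$ we have $S\subset\{\RE(\psi-\varphi)>0\}$, hence $B_c\subset A_{c'}$ but $A_c\not\subset B_{c'}$: the matrix is upper triangular and the $\Hom$-space is $\fb^+$; sign $-$ is symmetric and gives $\fb^-$. In case (ii), since $S$ contains the Stokes line $L_i$ it meets both $\{\RE(\psi-\varphi)>0\}$ and $\{\RE(\psi-\varphi)<0\}$ arbitrarily close to $0$, so $\RE(\psi-\varphi)$ is unbounded of both signs on $S$; neither off-diagonal inclusion can hold, the matrix is diagonal and the $\Hom$-space is $\ft$.

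For the last assertion I would pass to the limit. Since $F\tens\C_{\opb qS}\simeq\indlim[c]\C_{A_c}$ (and similarly for $G$), and $\Hom$ between genuine sheaves is the same in $\BDC(\mathrm{I}\C_\XP)$ as in $\BDC(\C_\XP)$, the algebra $\Endo\bigl((F\dsum G)\tens\C_{\opb qS}\bigr)$ is the projective limit over $c$ of the inductive limit over $c'$ of the spaces $\Hom\bigl((F_c\dsum G_c)|_{\opb qS},(F_{c'}\dsum G_{c'})|_{\opb qS}\bigr)$. The computation above holds verbatim for every $c'\geq c$, and the transition maps act as the identity on the surviving $\C$-entries and by zero on the others; hence the double limit is $\mathfrak v$, the identification respecting matrix multiplication.

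The part demanding genuine care is the middle step: extracting from the effective-pole hypothesis the precise unboundedness --- with the correct sign or signs --- of $\RE(\psi-\varphi)$ on $S$. This is where the irregularity of $\shm$ makes itself felt and forces the off-diagonal entries to vanish, and where the difference between hypotheses (i) and (ii) matters: a sector lying on one side of a Stokes curve behaves differently from one straddling it. The remaining ingredients --- contractibility of $A_c$ and $B_c$, the adjunction formula for $\Hom(\C_A,\C_B)$, and the passage to the limit --- are routine, and only the degree-$0$ part of the $\Hom$-complexes ever intervenes.
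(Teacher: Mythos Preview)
The paper states this lemma without proof; recall that the authors warn that proofs are only sketched in this Note, and for Lemma~\ref{lem:FcGc} no argument is given at all. Your sketch is correct and is precisely the natural computation: the $\Hom$-space breaks up as a $2\times2$ matrix whose entries are $\Hom(\C_A,\C_B)\simeq\Gamma(A;\C_{A\cap B})$ for $A\in\{A_c,B_c\}$ and $B\in\{A_{c'},B_{c'}\}$; connectedness of $A$ (via the biholomorphism $t\mapsto t+\varphi(x)$ identifying $A_c$ with $S\times\{\RE t<c\}$) forces each entry to be $\C$ or $0$ according as $A\subset B$ or not; and the effective pole of $\psi-\varphi$ makes $\RE(\psi-\varphi)$ unbounded on $S$ with the sign(s) dictated by hypothesis (i) or (ii), which kills exactly the right off-diagonal entries. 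The passage to the ind-limit for the final assertion is likewise routine.
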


This proves that the Stokes lines are encoded in $H$.
Let us show how to recover the Stokes matrices of $\shm$ as glueing data for $H$.

Let $S_i$ be an open sector which contains $L_i$ and is disjoint from $L_j$ for $i\neq j$. We choose $S_i$ so that $\Union\nolimits_{i\in I} S_i = U$.

Then for each $i\in I$, there is an isomorphism
\[
\alpha_i\colon H\tens\C_{\opb q S_i} \simeq (F\dsum G)\tens\C_{\opb q S_i}.
\]

Take a cyclic ordering of $I$ such that the Stokes lines get ordered counterclockwise. 

Since $\{S_i\}_{i\in I}$ is an open cover of $U$, the ind-sheaf $H$ is reconstructed from $F\dsum G$ via the glueing data given by the Stokes matrices
\[
A_i = \alpha_{i+1}^{-1}\alpha_i|_{\opb q(S_i\cap S_{i+1})}\in \fb^\pm.
\]

\providecommand{\bysame}{\leavevmode\hbox to3em{\hrulefill}\thinspace}

\end{document}